\newtheorem{theorem}{Theorem}[section]
\newtheorem{lemma}[theorem]{Lemma}
\newtheorem{corollary}[theorem]{Corollary}
\theoremstyle{definition}
\theoremstyle{remark}
\newtheorem{remark}[theorem]{Remark}
\newtheorem{claim}{Claim}
\numberwithin{equation}{section}
\newcounter{casenum}
\renewcommand{\leq}{\leqslant}
\renewcommand{\geq}{\geqslant}
\renewcommand{\{}{\left\lbrace}
\renewcommand{\}}{\right\rbrace}
\newcommand{\floor}[1]{{\left\lfloor#1\right\rfloor}}
\newcommand{\pfrac}[2]{{\left(\frac{#1}{#2}\right)}}
\newcommand{\ptext}[1]{{\left(\textup{#1}\right)}}
\newcommand{\op}[1]{\operatorname{#1}}
\def\GL{\op{GL}}
\def\SL{\op{SL}}
\def\PGL{\op{PGL}}
\def\PSL{\op{PSL}}
\def\F{\mathbf{F}}
\def\P{\mathbf{P}}
\def\E{\mathbf{E}}
\def\Var{\op{Var}}
\def\frakC{\mathfrak{C}}
\def\ord{\op{ord}}
\def\gcd{\op{gcd}}
\def\lcm{\op{lcm}}
\def\harm{\eta}
\def\loglog{\log\log}
\begin{document}

\title{Random generation of the special linear group}

\author{Sean Eberhard}
\address{Sean Eberhard, London, UK}
\curraddr{}
\email{eberhard.math@gmail.com}

\author{Stefan-C. Virchow}
\address{Stefan-C. Virchow, University of Rostock, Rostock, Germany}
\email{stefan.virchow@uni-rostock.de}

\subjclass[2000]{}




\begin{abstract}
It is well known that the proportion of pairs of elements of $\SL(n,q)$ which generate the group tends to $1$ as $q^n\to \infty$. This was proved by Kantor and Lubotzky using the classification of finite simple groups. We give a proof of this theorem which does not depend on the classification.

An essential step in our proof is an estimate for the average of $1/\ord g$ when $g$ ranges over $\GL(n,q)$, which may be of independent interest. We prove that this average is
\[
  \exp(-(2-o(1)) \sqrt{n \log n \log q}).
\]
\end{abstract}

\maketitle

\section{Introduction}

Nonabelian finite simple groups $G$ are generated by a generic pair of elements $x,y\in G$, i.e.,
\[
  \frac1{|G|^2} |\{(x,y) \in G^2 : \langle x,y\rangle = G\}| \longrightarrow 1
\]
as $|G|\to\infty$, the limit being taken along any sequence of finite simple groups $G$. This was originally conjectured by Dixon~\cite{dixon} (before we even knew that every finite simple group is $2$-generated). It was proved for the alternating groups by Dixon in the same paper, for classical groups by Kantor and Lubotzky~\cite{kantor-lubotzky}, and finally for exceptional groups by Liebeck and Shalev~\cite{liebeck--shalev--95}.

Almost all of this work depends on the classification of finite simple groups (CFSG). To some extent this is natural and unavoidable: how can one be expected to prove a theorem about finite simple groups without even knowing what they are? But for specific families such as $\SL(n, 2)$ it is lamentable. In this connection we echo an opinion of Kantor (see \cite[Preface]{cameron-kantor-2018}): the classification should not be invoked when it is not needed. We believe that a proof not depending on CFSG, where possible, is often more illuminating, and more generalizable.

We are aware of CFSG-free proofs of Dixon's conjecture only for $A_n$ and for groups of essentially bounded rank. For $A_n$, see Dixon's original proof, improvements by Bovey and Williamson~\cite{bovey-williamson} and Bovey~\cite{bovey}, or our sharp estimate \cite{eberhard-virchow-Sn}. For groups of bounded rank (or, at best, rank bounded by a rather slowly growing function of the characteristic), there is a large body of work culminating in the theorem that a random pair of elements not only generates but defines an expander Cayley graph: see Breuillard, Green, Guralnick, and Tao~\cite{bggt}. The critical group-theoretic input for this theorem is the Larsen--Pink theorem~\cite{larsen-pink}, which relates general finite subgroups of bounded-rank algebraic groups to algebraic or arithmetic subgroups: this theorem relies on the theory of algebraic groups, not on the classification. In all other cases we are not aware of a CFSG-free proof.

The purpose of this note is to supply such a proof for the family of special linear groups $\SL(n,q)$. We prove the following theorem.

\begin{theorem}\label{main-thm}
Let $\pi,\sigma \in \SL(n,q)$ be chosen uniformly at random. Then
\[
  \P(\langle \pi, \sigma\rangle \neq \SL(n,q)) \leq e^{-c \sqrt{n} \log q} + e^{-(2-o(1)) \sqrt{n \log n \log q}}.
\]
\end{theorem}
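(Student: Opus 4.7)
My plan is to apply the standard reduction: for $G=\SL(n,q)$,
\[
\P(\langle\pi,\sigma\rangle\neq G)\leq\sum_{[M]\text{ maximal}}\frac{|M|}{|G|},
\]
the sum ranging over conjugacy classes of maximal subgroups of $G$. With CFSG off limits I cannot enumerate the almost simple class $\mathcal S$, so the strategy must substitute a uniform structural argument.

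For the reducible (parabolic) maximal subgroups, the expected number of common invariant $k$-subspaces of $\pi,\sigma$ is at most $\binom{n}{k}_q^{-1}\leq q^{-k(n-k)}$, which sums to $O(q^{-(n-1)})$. Stabilisers of decompositions $\F_q^n=V_1\oplus\cdots\oplus V_k$ (the imprimitive case) are bounded analogously by counting orbits on direct-sum decompositions.

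For the remaining Aschbacher classes---$\mathcal C_3$ (field extension), $\mathcal C_5$ (subfield), $\mathcal C_4/\mathcal C_7$ (tensor product), $\mathcal C_8$ (classical normaliser), and the residual class $\mathcal S$---membership in $M$ restricts the arithmetic of $\ord\pi$ and the factorisation of its characteristic polynomial. For instance, $\pi\in\GL(n/k,q^k).k$ forces every irreducible factor of the characteristic polynomial of $\pi$ over $\F_q$ to have degree divisible by $k$, so $\ord\pi$ divides $k\cdot\lcm_{i\leq n/k}(q^{ki}-1)$; membership in a subfield or tensor product subgroup gives analogous restrictions. I would collect these constraints and apply
\[
\E(1/\ord\pi)=e^{-(2-o(1))\sqrt{n\log n\log q}}
\]
(weighted appropriately against $|M|/|G|$) to produce the sharper tail in the theorem; the looser $e^{-c\sqrt n\log q}$ term absorbs those classes where the element-order estimate is not tight enough on its own and a direct count must take over.

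The \emph{main obstacles} are: (i) handling class $\mathcal S$ without CFSG, which classically relies on a Jordan-type theorem bounding the orders of absolutely irreducible primitive subgroups of $\GL(n,q)$ that do not contain $\SL$; and (ii) proving the order estimate itself, which I would approach by treating the degrees in the factorisation of the characteristic polynomial of a random element of $\GL(n,q)$ as an analogue of the cycle structure of a random permutation in $S_n$, and running a Dickman-type smooth-number argument to show that elements whose order lacks large primitive prime divisors form an exponentially sparse set of density $e^{-(2-o(1))\sqrt{n\log n\log q}}$.
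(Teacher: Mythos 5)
Your proposal takes a genuinely different route from the paper, and the route has a gap that you yourself flag but do not close. You propose the standard reduction
\[
\P(\langle\pi,\sigma\rangle\neq G)\leq\sum_{[M]\ \text{maximal}}\frac{|M|}{|G|},
\]
followed by a case analysis over the Aschbacher classes. This is precisely the Kantor--Lubotzky strategy that the theorem is trying to \emph{replace}: the obstruction you name in (i) --- that the residual class $\mathcal S$ of almost simple maximal subgroups cannot be controlled without CFSG or an equivalent Jordan-type theorem for unbounded degree and characteristic --- is not a technical wrinkle but the entire reason a CFSG-free proof is nontrivial. An outline that identifies this as an ``obstacle'' without resolving it is an outline of the old proof, not a new one. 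There is a second, subtler issue: the connection you posit between $\E(1/\ord\pi)$ and the terms $|M|/|G|$ for the geometric classes is not established. Knowing that the harmonic mean of $\ord\pi$ is large does not by itself control the probability that $\pi$ lies in a field-extension or subfield subgroup, and you would need to supply the inequality that links them.

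The paper never enumerates maximal subgroups. It uses a second-moment ``$\pi\sigma^i$ trick'': for a conjugation-invariant set $\frakC$ one bounds $\P(\langle\pi,\sigma\rangle\cap\frakC=\emptyset)$ by a Chebyshev argument, expands the variance via characters, and controls the resulting sum with the Larsen--Shalev--Tiep bound $|\chi(g)|/\chi(1)\leq q^{-\sqrt{\supp g}/481}$ together with the $\eta_G$ estimate. The inverse-order average enters because the class function $\eta(g)=\sum_{\sigma:\,g\in\langle\sigma\rangle}1/\ord\sigma$ appears in the variance computation, with $\eta(1)/|G|=\eta_G$ as the dominant contribution. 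Generation is then extracted by picking two explicit large conjugation-invariant sets --- irreducible elements of order $(q^n-1)/(q-1)$, and elements of order $q^{n-1}-1$ stabilising a line-hyperplane decomposition --- and showing that any pair drawn from these sets generates, via the CFSG-free Cameron--Kantor classification of linear groups transitive on $k$-flats. So the substitute for ``handling $\mathcal S$'' is a much older, geometric classification of multiply transitive linear groups, not a direct density argument over subgroup classes.

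Your instinct about how to prove the order estimate --- treating the degree sequence of the characteristic polynomial of a random matrix as an analogue of the cycle type of a random permutation, and running a smooth-number-style argument --- is close in spirit to what the paper does: the key lemma there is a generating-function bound for the probability that all cycle lengths of a random permutation in $S_n$ lie in $\{d:q^d-1\mid m\}$. But the place where the estimate is used, and the architecture around it, is entirely different.
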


Which of the two terms in the theorem dominates depends on the relative size of $n$ and $q$, but neither is sharp in any case, since the Kantor--Lubotzky proof shows (see Kantor~\cite[Theorem~3.3]{kantor-some-topics-survey-92}) that
\[
  \P(\langle \pi, \sigma\rangle \neq \SL(n,q)) = 2q^{-n} + O(q^{-7(n-1)/6}).
\]
We have not been able to prove this sharp estimate with our method.

To give a quick sketch of the proof, the main idea is to use the ``$\pi\sigma^i$ trick'': if $\pi,\sigma\in G$ are random, then the elements $\pi, \pi\sigma, \dots, \pi\sigma^{N-1}$ should be approximately pairwise equidistributed, so we ought to be able to use the second moment method to prove that there must be some $i$ such that $\pi\sigma^i \in \frakC$. Here $\frakC$ is any set we like which is both conjugation-invariant and large enough. Bounding the variance in the second moment method reduces to bounding some character sums, and for this we rely on character estimates of Larsen, Shalev, and Tiep~\cite{larsen-shalev-tiep} (and the strength of these estimates is represented in the first term in Theorem~\ref{main-thm}).

Next we need to choose a few large conjugacy-invariant subsets $\frakC_i \subset G$ such that if $g_i \in \frakC_i$ for each $i$ then $\langle g_1, g_2, \dots \rangle = G$. We take $\frakC_1$ to be the set of all irreducible $g\in \SL(n,q)$ of order $(q^n-1)/(q-1)$, and $\frakC_2$ to be the set of all $g\in \SL(n,q)$ of order $q^{n-1}-1$ preserving a decomposition $\F_q^n = \ell \oplus W$ with $\dim \ell=1$ and $\dim W = n-1$. We can prove that $\langle g_1, g_2\rangle = G$ under these circumstances by quoting a well-known theorem of Cameron and Kantor on multiply transitive subgroups of $\PGL(n,q)$.

Interestingly, an essential ingredient in our method is an estimate for the average of $1/\ord g$ for $g \in G$, or equivalently the harmonic mean of the orders of the elements of $\SL(n,q)$. Because it might be of independent interest, we devote some effort to obtaining a sharp estimate.

\begin{theorem}
Let
\[
  \harm_G = \frac1{|G|} \sum_{g \in G} \frac1{\ord g} .
\]
If $G$ is $\GL(n,q)$, $\PGL(n,q)$, $\SL(n,q)$, or $\PSL(n,q)$, then
\[
  \harm_G \leq \exp(-(2-o(1)) \sqrt{n \log n \log q}).
\]

Moreover, if $n$ is sufficiently large compared to $q$ then
\[
  \harm_G = \exp(-(2+o(1)) \sqrt{n \log n \log q}).
\]
\end{theorem}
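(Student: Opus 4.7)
The plan is to handle $\GL(n,q)$ directly (the passage to $\PGL$, $\SL$, $\PSL$ changes orders by at most a factor of $q-1$, absorbed in the $o(1)$ term). Conjugacy classes of $\GL(n,q)$ are parametrized by types $\tau=(\lambda^{(f)})$, with $f$ ranging over monic irreducibles $\neq X$ in $\F_q[X]$ and $\sum_f d_f |\lambda^{(f)}|=n$; the order is $\ord(\tau) = \lcm_f\bigl(\ord(f)\, p^{\lceil \log_p \lambda^{(f)}_1\rceil}\bigr)$. For the upper bound, integration by parts gives
\[
  \harm_G = \int_0^\infty F(s) e^{-s}\, ds \quad \text{where } F(s) = \P[\log \ord g \leq s].
\]
The key technical estimate is a $q$-Dickman-type bound:
\[
  F(s) \leq \exp\!\left(-(1-o(1)) \frac{n \log n \log q}{s}\right).
\]
Heuristically, $\ord g \leq e^s$ forces each primary factor $f$ to satisfy $\ord(f) \leq e^s$, which for ``typical'' $f$ forces $\deg f \leq s/\log q$; a $q$-analog of the Dickman--de Bruijn estimate bounds the count of $g \in \GL(n,q)$ with all primary factor degrees $\leq s/\log q$. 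Saddle-point evaluation of the integral at $s_\ast = \sqrt{n\log n\log q}$ then yields $\harm_G \leq \exp(-(2-o(1))\sqrt{n\log n\log q})$.

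For the lower bound (when $n$ is large compared to $q$), I would construct many elements of small order. Choose $d$ close to $\sqrt{n\log n/\log q}$ with $d\mid n$, and take $g = h_1 \oplus \cdots \oplus h_{n/d}$, where each $h_i \in \GL(d,q)$ is the companion matrix of a primitive irreducible polynomial of degree $d$ and the $h_i$ are pairwise distinct. Each such $g$ has $\ord g = q^d - 1$. Counting the $\binom{\phi(q^d-1)/d}{n/d}$ conjugacy classes of this form, each of size $|\GL(n,q)|/(q^d-1)^{n/d}$, and dividing by $q^d-1$, the total contribution to $\harm_G$ is
\[
  \exp\!\left(-d\log q - (n/d)\log n + O(n/d)\right),
\]
which at the chosen $d$ equals $\exp(-(2+o(1))\sqrt{n\log n\log q})$, matching the upper bound.

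The main obstacle is proving the sharp bound on $F(s)$ with the correct constant $1$ in the exponent. The naive argument ``$\ord g \leq y \Rightarrow$ all primary factor degrees $\leq y$'' loses a factor of $\log q$ and only gives $\harm_G \lesssim 1/n$. The refinement must enumerate the atypical primary factors $f$ with $\deg f > \log_q y$ but $\ord(f) \leq y$ (arising from divisors $m\leq y$ of $q^d-1$ with $\ord_m(q)$ unusually large) and show their contribution is negligible. This requires a $q$-analog of the Dickman--de Bruijn theory of smooth numbers, applied to the cycle index of $\GL(n,q)$.
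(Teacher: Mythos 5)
Your lower bound is essentially the paper's: block-diagonal $g$ built from distinct degree-$d$ irreducibles with $d \approx \sqrt{n\log n/\log q}$, using $|C_G(g)| = (q^d-1)^{n/d}$ and $\ord g \leq q^d-1$. Two small remarks: the paper does not require the polynomials to be primitive (which is harmless but slightly wasteful on your part), and you gloss over the fact that $n$ may have no divisor $d$ of the right size, which the paper handles with an explicit patching construction (a secondary degree $d_1 \sim n^{1/10}/\log q$ plus identity blocks); that case is not negligible and does need to be addressed.

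Your upper bound has the right shape but a genuine gap at its core. The reduction to a Dickman-type bound $F(s) \leq \exp(-(1-o(1))\, n\log n\log q/s)$ followed by saddle-point evaluation at $s_* = \sqrt{n\log n\log q}$ is morally equivalent to what the paper does (the paper reaches the same place by summing directly over possible values $m = \lcm(\ord f_1,\dots,\ord f_l)$ rather than by Abel summation over $s=\log m$). The problem is that you explicitly defer the actual proof of the $F(s)$ bound, and this is exactly where the real content lies. The obstruction you identify — irreducible factors $f$ with $\deg f$ large but $\ord f$ small — is the right one. The paper's resolution is a specific structural lemma: writing $D_m = \{d : q^d - 1 \mid m\}$, it proves that apart from two exceptional values $d_1, d_2$ (and their small divisors), every element of $D_m$ is at most $\tfrac{4}{9}\log_q m$; the key input is $\gcd(q^{d_1}-1,q^{d_2}-1) = q^{\gcd(d_1,d_2)}-1$, which forces $\lcm(q^{d_1}-1,q^{d_2}-1,q^{d_3}-1)$ to exceed $m$ if three ``large, mutually incompatible'' degrees were present. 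Combined with an exponential-generating-function bound on the probability that a random permutation has all cycle lengths in $D_m$, this yields the sharp constant. Without some version of this structural fact about $D_m$, the Dickman analogy stays heuristic and the constant $1$ in your $F(s)$ estimate is unsubstantiated.

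You also omit the non-semisimple part of the conjugacy-class parametrization. Your order formula correctly includes the $p^{\lceil \log_p \lambda^{(f)}_1 \rceil}$ factor, but the argument as sketched only handles classes with all $\lambda^{(f)}$ trivial; the paper separates the contribution with some $m_i \geq 2$ into a second quantity $\eta_2$, bounds it crudely by $p(n)\,q^{n/2}$, and shows the convolution with the semisimple part is dominated by the semisimple term. Finally, the claim that passing from $\GL$ to $\PGL$, $\SL$, $\PSL$ costs only a factor $q-1$ absorbed in the $o(1)$ is true only when $n \gg \log q$; when $n = O(\log q)$ the factor $q$ is not negligible relative to $e^{\sqrt{n\log n\log q}}$, and the paper reruns the $\eta_1$ analysis for those groups to avoid this loss.
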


In fact, in general $\eta_G$ represents a bottleneck in our method. This bottleneck already featured in our previous work \cite{eberhard-virchow-Sn}, in which we applied the same method to the alternating group to prove, independent of the classification, that two elements $\pi, \sigma \in A_n$ will generate with probability $1 - 1/n + 1/n^{2+o(1)}$. The pithy reason that we were not able to push beyond the $1/n^{2+o(1)}$ term is that $\eta_{A_n} = 1/n^{2+o(1)}$.

Finally, we believe our method may generalize to other finite simple groups of Lie type. We can continue to apply the Larsen--Shalev--Tiep character estimates, so to proceed one would need two main ingredients:
\begin{enumerate}
    \item a few large conjugacy-invariant subsets $\frakC_i$ such that for any choice of $g_i \in \frakC_i$ for each $i$ we have $\langle g_1, g_2, \dots \rangle = G$;
    \item an estimate for $\eta_G$.
\end{enumerate}
We hope to return to this challenge in future work.

\subsection{Notation}

We have already introduced our most nonstandard convention, which is the symbol
\[
  \harm_G = \frac1{|G|} \sum_{g \in G} \frac1{\ord G}
\]
for any group $G$. (The intention is that $\eta$ stands for ``harmonic''.)

\def\Irr{\op{Irr}}

We denote by $\Irr(G)$ the set of irreducible characters of $G$, and we write $1$ for the trivial character. We also write $1$ for the identity element of $G$. Given a set $\frakC \subset G$ we write $1_\frakC$ for the indicator of $\frakC$, so for instance
\[
  \langle \chi, 1_\frakC \rangle = \frac1{|G|} \sum_{g \in \frakC} \chi(g) \qquad (\chi \in \Irr(G)).
\]

We use standard big-$O$ and little-$o$ notation all over the shop. Our convention is that $f(x) = O(g(x))$ means $|f(x)| \leq C g(x)$ for some constant $C$ and for all $x$ under consideration (i.e., no specific limit $n\to\infty$ or $q\to\infty$ is assumed). When we write $o(g)$ we mean an estimate which holds in either limit $n\to\infty$ or $q\to\infty$, unless otherwise specified. In one or two places we use the Vinogradov notation $f \ll g$. This means simply $f = O(g)$.

\def\totient{\varphi}
\def\divisor{\sigma_0}

We write $\divisor(n)$ for the divisor-counting function, $\totient(n)$ the Euler totient function, and $p(n)$ for the partition function. We may as well note the following basic bounds now:
\begin{align}
    \divisor(n) &\leq e^{O(\log n / \loglog n)}, \label{divisor-bound} \\
    \totient(n) &\gg \frac{n}{\loglog n}, \label{totient-bound} \\
    p(n) &\leq e^{O(n^{1/2})}. \label{partition-bound}
\end{align}
See Hardy--Wright~\cite{hardy-wright} for the first two of these (see Theorems~317 and 328, respectively). For \eqref{partition-bound}, see \cite{hardy--ramanujan}.

\subsection{Acknowledgements}

The first author is grateful to Bill Kantor for remembering and discussing the technical details of a 35-year-old paper. We are also grateful to Will Sawin and Felipe Voloch, who in answering a Mathoverflow question provided us with the proof of an essential lemma (see the appendix). The second author is grateful to Jan-Christoph Schlage-Puchta for the many inspiring discussions we had.

\section{The \texorpdfstring{$\pi\sigma^i$}{pi sigma\^{}i} trick}\label{sec:pisigma}

By ``$\pi \sigma^i$ trick'' we mean the observation that the elements $\pi \sigma^i$ for $0 \leq i < N$ should behave roughly pairwise independently, combined with the second moment method. To our knowledge this idea was first made explicit by Babai, Beals, and Seress~\cite[Section~4]{babai--beals--seress}, and it has been used to good effect several times since (see, e.g.,~\cite{babai--hayes, Schlage-Puchta2012, helfgott--seress--zuk}). We use it to prove the following theorem.

\def\chineq1{{\substack{\chi \in \Irr(G) \\ \chi \neq 1}}}
\def\chietaratio{{\frac{\langle \chi, \eta \rangle}{\chi(1)}}}

\begin{theorem} \label{thm:pisigma}
Let $G$ be a finite group, and let $\eta$ be the class function defined by
\[
  \eta(g) = \sum_{\substack{
    \sigma \in G \\
    g \in \langle \sigma\rangle
    }}
  \frac1{\ord \sigma}.
\]
Let $\frakC$ be any conjugacy-invariant subset of $G$. If $\pi, \sigma$ are chosen uniformly at random, then
\[
  \P(\langle \pi, \sigma\rangle \cap \frakC = \emptyset)
  \leq \frac{|G|}{|\frakC|} \max_\chineq1 \chietaratio.
\]
\end{theorem}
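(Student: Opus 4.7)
The plan is to apply the second moment method to the non-negative random variable
\[
  X(\pi,\sigma) = \frac{|\pi\langle\sigma\rangle \cap \frakC|}{\ord\sigma} = \frac{1}{\ord\sigma} \sum_{i=0}^{\ord\sigma - 1} 1_\frakC(\pi\sigma^i).
\]
Since $\pi\langle\sigma\rangle \subseteq \langle\pi,\sigma\rangle$, the event $X > 0$ already forces $\langle\pi,\sigma\rangle \cap \frakC \neq \emptyset$. Chebyshev's inequality then gives
\[
  \P(\langle\pi,\sigma\rangle \cap \frakC = \emptyset) \leq \P(X=0) \leq \frac{\Var X}{(\E X)^2},
\]
so the task reduces to estimating the first two moments of $X$.

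The first moment is easy: for fixed $\sigma$ and $i$, the element $\pi\sigma^i$ is uniform on $G$ as $\pi$ varies, so $\E X = |\frakC|/|G|$. For the second moment, we augment the randomness by drawing $i,j$ uniformly from $\{0, \ldots, \ord\sigma - 1\}$ and change variables to $g = \pi\sigma^i$ and $\tau = \sigma^{j-i}$. Then $\pi\sigma^j = g\tau$, the element $g$ is uniform on $G$ and independent of $\tau$, and a direct calculation shows that the law of $\tau$ is precisely $\eta/|G|$:
\[
  \P(\tau = t) = \frac{1}{|G|} \sum_{\sigma \in G} \frac{[t \in \langle\sigma\rangle]}{\ord\sigma} = \frac{\eta(t)}{|G|}.
\]
This is the key point at which $\eta$ enters the argument, and it yields
\[
  \E X^2 = \frac{1}{|G|^2} \sum_{g, h \in G} 1_\frakC(g)\, 1_\frakC(h)\, \eta(g^{-1}h).
\]

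Both $1_\frakC$ and $\eta$ are class functions, so we expand them in irreducible characters: write $1_\frakC = \sum_\chi c_\chi \chi$ and $\eta = \sum_\chi e_\chi \chi$ with $c_\chi = \langle 1_\frakC, \chi\rangle$ and $e_\chi = \langle \eta, \chi\rangle$. Using the convolution identity $\chi \ast \chi' = \delta_{\chi,\chi'}\,(|G|/\chi(1))\,\chi$, a consequence of Schur orthogonality, we obtain
\[
  \E X^2 = \sum_{\chi \in \Irr(G)} \frac{|c_\chi|^2\, e_\chi}{\chi(1)}.
\]
The $\chi = 1$ term equals $(\E X)^2$ exactly, so $\Var X = \sum_{\chi\neq 1} |c_\chi|^2 e_\chi / \chi(1)$. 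The crucial positivity input is $e_\chi \geq 0$ for every $\chi$: indeed, $\eta$ is a non-negative combination of subgroup indicators $1_{\langle\sigma\rangle}$, and for any subgroup $H \leq G$, $\langle 1_H, \chi\rangle_G = (|H|/|G|) \langle \chi|_H, 1\rangle_H \geq 0$ by Frobenius reciprocity. This positivity lets us factor out the maximum, and Parseval's identity $\sum_\chi |c_\chi|^2 = |\frakC|/|G|$ gives
\[
  \Var X \leq \frac{|\frakC|}{|G|} \max_{\chi \neq 1} \frac{\langle \eta, \chi\rangle}{\chi(1)}.
\]
Dividing by $(\E X)^2 = (|\frakC|/|G|)^2$ produces the theorem.

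The main conceptual step, and essentially the only nontrivial idea beyond routine Fourier analysis, is identifying the law of $\sigma^{j-i}$ as $\eta/|G|$; this is what makes $\eta$ appear in the final bound. The other subtlety worth flagging is the non-negativity of $\langle \eta, \chi\rangle$, without which we could only bound by the absolute value and obtain a weaker estimate.
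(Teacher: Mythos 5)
Your proof is correct, and the underlying strategy is the same as the paper's: apply Chebyshev to a count of $i$ with $\pi\sigma^i \in \frakC$, reduce the second moment to the class function $\eta$, expand in irreducible characters, and factor out the maximum. The essential difference is one of packaging. The paper works with $X_N = \#\{0 \leq i < N : \pi\sigma^i \in \frakC\}$ and sends $N \to \infty$, so that $\eta$ emerges as a limiting weight after an explicit computation with the counting functions $r_i$. You instead work directly with the normalized quantity $X = |\pi\langle\sigma\rangle \cap \frakC|/\ord\sigma$ (which is the limit of $X_N/N$), augment with two uniform indices $i,j$, and identify the law of $\sigma^{j-i}$ as $\eta(\cdot)/|G|$ in one line. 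This eliminates the $N\to\infty$ limit entirely and makes the role of $\eta$ conceptually transparent: it is literally the distribution of a random element of a random cyclic subgroup. You also explicitly verify $\langle\eta,\chi\rangle \geq 0$ via the decomposition of $\eta$ into subgroup indicators and the non-negativity of $\langle 1_H, \chi\rangle_G$; this is genuinely needed at the final step (when replacing $\sum_{\chi\neq 1}|c_\chi|^2$ by the larger quantity $|\frakC|/|G|$, the factored-out maximum must be non-negative for the inequality to go the right way), and the paper leaves it implicit. Both are worthwhile improvements in exposition over the paper's version; the mathematical content is the same.
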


\begin{proof}(cf.~\cite[Section~3]{eberhard-virchow-Sn})
Let $N$ be a positive integer and let $X_N$ be the number of $i \in \{0, \dots, N-1\}$ such that $\pi\sigma^i \in \frakC$. By Chebyshev's inequality,
\begin{equation} \label{chebyshev}
  \P(\langle \pi, \sigma \rangle \cap \frakC = \emptyset) \leq \P(X_N = 0) \leq \frac{\Var X_N}{(\E X_N)^2}.
\end{equation}
Clearly $\E X_N = N \frac{|\frakC|}{|G|}$, while
\begin{align*}
  \E X_N^2
  &= \sum_{i,j=0}^{N-1} \P(\pi\sigma^i, \pi\sigma^j \in \frakC) \\
  &= \sum_{i,j=0}^{N-1} \P(\pi, \pi\sigma^{j-i} \in \frakC) \\
  &= \sum_{i=1}^N 2(N-i) \P(\pi, \pi \sigma^i \in \frakC) + O(N).
\end{align*}
Expand the inside term as
\[
  \P(\pi, \pi\sigma^i \in \frakC) = \frac1{|G|^2} \sum_{\pi, \sigma} 1_\frakC(\pi) 1_\frakC(\pi \sigma^i) = \frac1{|G|^2} \sum_{\pi, g} 1_\frakC(\pi) 1_\frakC(g) r_i(\pi^{-1} g),
\]
where $r_i(x)$ is the number of $\sigma \in G$ such that $\sigma^i = x$. Note
\[
  \sum_{i = 1}^N 2(N-i) r_i(x) \sim N^2 \eta(x) \qquad (\text{as}~N\to\infty).
\]
Indeed, the left-hand side is
\[
  \sum_{\sigma\in G} \sum_{i=1}^N 2(N-i) 1_{\sigma^i = x},
\]
and the inner sum here is either zero if $x \notin \langle \sigma\rangle$ or, if $\sigma^{i_0} = x$ say, then
\[
  \sum_{\substack{1 \leq i \leq N \\ i \equiv i_0 \pmod{\ord \sigma}}} 2(N-i) \sim \frac{N^2}{\ord \sigma}.
\]
Thus
\[
  \E X_N^2 \sim \frac{N^2}{|G|^2} \sum_{\pi, g} 1_\frakC(\pi) 1_\frakC(g) \eta(\pi^{-1} g).
\]
Applying \cite[Proposition~9.33]{curtis-reiner}, this is the same as
\[
  N^2 \sum_{\chi\in\Irr(G)} \frac{|\langle \chi, 1_\frakC\rangle|^2 \langle \chi, \eta\rangle}{\chi(1)}.
\]
Note that the $\chi = 1$ term is exactly $(\E X_N)^2$. Thus in the limit $N \to \infty$ we get
\[
  \frac{\Var X_N}{(\E X_N)^2} \to \pfrac{|G|}{|\frakC|}^2 \sum_\chineq1 \frac{|\langle \chi, 1_\frakC \rangle|^2 \langle \chi, \eta \rangle}{\chi(1)}.
\]
Thus, taking the limit $N\to\infty$ in \eqref{chebyshev}, we have
\[
  \P(\langle \pi, \sigma\rangle \cap \frakC = \emptyset)
  \leq \pfrac{|G|}{|\frakC|}^2 \sum_\chineq1 \frac{|\langle \chi, 1_\frakC \rangle|^2 \langle \chi, \eta \rangle}{\chi(1)}.
\]
Finally, by orthogonality of characters we have
\[
  \sum_{\chi \in \Irr(G)} |\langle \chi, 1_\frakC \rangle|^2 = \frac{|\frakC|}{|G|}.
\]
Thus
\[
  \P(\langle \pi, \sigma\rangle \cap \frakC = \emptyset)
  \leq \frac{|G|}{|\frakC|} \max_\chineq1 \chietaratio ,
\]
as claimed.
\end{proof}

To apply the theorem we will need to bound $\langle \chi, \eta\rangle / \chi(1)$. In particular, considering the contribution from just $g=1$, we will need to bound
\[
  \frac{\eta(1)}{|G|} = \frac1{|G|} \sum_{g \in G} \frac1{\ord g}.
\]
We turn to this in the next section.

\section{The average of \texorpdfstring{$1/\ord g$}{1 / ord g}}

In this section we are concerned with bounding
\[
  \harm_G = \frac{\eta(1)}{|G|} = \frac1{|G|} \sum_{g \in G} \frac1{\ord g}
\]
for various groups $G$, particularly $\GL(n, q)$, $\PGL(n,q)$, and $\PSL(n,q)$.

\begin{theorem} \label{inverse-orders-theorem}
Let $G$ be $\GL(n, q)$, $\PGL(n,q)$, $\SL(n,q)$, or $\PSL(n,q)$. Then
\[
  \harm_G \leq \exp(-(2-o(1)) \sqrt{n \log n \log q}).
\]
Moreover, for fixed $q$ and $n \to \infty$ we have a matching lower bound
\[
  \harm_G \geq \exp(-(2+o(1)) \sqrt{n \log n \log q}).
\]
\end{theorem}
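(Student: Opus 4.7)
My plan is to prove the upper bound for $G = \GL(n,q)$ first, then deduce the other three cases by elementary comparisons. Since $[\GL(n,q):\SL(n,q)] = q-1$ and $|Z(\GL(n,q))| = q-1$, each of $\SL$, $\PGL$, $\PSL$ is obtained from $\GL$ by passing to a subgroup of index dividing $q-1$ and/or a central quotient by a group of order dividing $q-1$; orders of individual elements change by at most a factor of $q-1$ under such a quotient. The polynomial-in-$q$ slack is absorbed into the $o(1)$ in the exponent $(2-o(1))\sqrt{n\log n\log q}$, so it suffices to work with $\GL(n,q)$.

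For the upper bound, I would use the elementary estimate
\[
  \harm_G \leq \frac{1}{T} + \P_g(\ord g \leq T) \leq \frac{1}{T} + \P_g(g^L = 1),
\]
where $L := \lcm(1, 2, \ldots, T)$, so that every $g$ with $\ord g \leq T$ satisfies $g^L = 1$. By the prime number theorem $\log L = (1+o(1))T$, so taking $T = \exp((2-\epsilon)\sqrt{n\log n\log q})$ makes the $1/T$ term match the target; it remains to bound $\P(g^L = 1)$.

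For $L$ coprime to the characteristic $p$ of $\F_q$, an element $g \in \GL(n,q)$ with $g^L = 1$ is semisimple, and its conjugacy class is specified by nonnegative multiplicities $(m_f)_{f \mid x^L - 1}$ with $\sum_f m_f \deg f = n$; the centralizer factors as $\prod_f \GL(m_f, q^{\deg f})$. This yields
\[
  \P(g^L = 1) = \sum_{(m_f)} \prod_{f \mid x^L - 1} \frac{1}{|\GL(m_f, q^{\deg f})|},
\]
which can be bounded using $|\GL(m, Q)| \geq c Q^{m^2}$. The case $p \mid L$ is handled by the Jordan decomposition $g = g_s g_u$; the unipotent factor multiplies orders by only $p^{O(\log n)}$, which is negligible at the relevant scale. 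The main technical obstacle will be summing these estimates with enough precision to extract the precise constant $2$ in the exponent; this amounts to a Landau-type variational problem, namely to compute
\[
  \max \Bigl\{\textstyle\sum_i \log r_i : r_i \text{ pairwise coprime prime powers},\ \sum_i \ord_{r_i}(q) \leq n\Bigr\},
\]
the $\F_q$-analog of Landau's optimization for $S_n$, which should have value $(1+o(1))\sqrt{n\log n\log q}$.

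For the matching lower bound (fixed $q$, $n \to \infty$), I would instantiate the variational problem directly: pick $(r_i)$ realizing the maximum, choose for each $r_i$ an irreducible $f_i \in \F_q[x]$ of degree $\ord_{r_i}(q)$ with $\ord f_i = r_i$, and form the block-diagonal matrix whose blocks are the companion matrices of the $f_i$ (padded with an identity block if needed). Summing over the allowed choices of $(r_i)$ and $(f_i)$ and combining the resulting density with the contribution $1/\prod r_i$ yields the lower bound, analogously to how Landau-optimal permutations give $\eta_{S_n} \gtrsim 1/g(n)^2$.
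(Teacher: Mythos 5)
Your lower-bound sketch is essentially the paper's construction and should work. However, the upper bound has a gap that is not cosmetic: the inequality
\[
  \harm_G \leq \frac1T + \P(\ord g \leq T) \leq \frac1T + \P(g^L=1), \qquad L = \lcm(1,\dots,T),
\]
throws away the crucial weight $1/\ord g$ on the short-order side. Write out $\harm_G = \sum_m \P(\ord g = m)/m$; when you split at $T$ and bound $1/m \leq 1$ for $m \leq T$, you convert a product into a sum, and the optimal $T$ then has $1/T \approx \P(\ord g \leq T)$, giving only $\harm_G \leq \exp(-(1+o(1))\sqrt{n\log n\log q})$ rather than the required exponent $2$. (Concretely: with $S := \log T$, one has $\P(g^L=1) \gtrsim \exp(-c\,n\log n\log q/S)$ coming from semisimple $g$ whose eigenvalue degrees are all at most $S/\log q$, and optimising $e^{-S} + e^{-cn\log n\log q/S}$ over $S$ gives a single $\sqrt{\phantom{x}}$ in the exponent, not two.) Replacing $\P(\ord g\leq T)$ by $\P(g^L=1)$ makes matters still worse, since $L$ is divisible by every prime power up to $T$ and therefore the set $\{d\leq n : q^d-1\mid L\}$ is essentially all of $\{1,\dots,S/\log q\}$, giving a weaker constraint than any single $m$. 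The paper avoids both losses by summing directly over conjugacy classes and grouping by the specific $m = \lcm(\ord f_1,\dots,\ord f_l)$, so the factor $\divisor(m)/m$ and the count of classes with that $m$ (bounded via a random-permutation generating-function argument, Lemma~\ref{nonsequitur}) appear multiplied, and the optimisation $\min_m\{\log m + n\log n\log q/\log m\}$ yields the $2$.

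Two further remarks. First, your reduction from $\SL$, $\PGL$, $\PSL$ to $\GL$ by ``$q^{O(1)}$ slack absorbed into $o(1)$'' requires $\log q = o(\sqrt{n\log n\log q})$, i.e.\ $\log q = o(n\log n)$; this fails when $n$ is bounded and $q\to\infty$, which is one of the regimes the theorem covers, and the paper has to rework the discrete-partition contribution for $\SL$ and $\PGL$ separately in that regime. Second, the variational problem you state, $\max\{\sum\log r_i : \text{coprime prime powers, }\sum\ord_{r_i}(q)\leq n\}$, has value $\sim n\log q$ (take a single prime power $\approx q^d$), not $\sqrt{n\log n\log q}$; the correct quantity to optimise is the trade-off $\log m + n\log n\log q/\log m$ above, in which the $\sqrt{\phantom{x}}$ arises from balancing the two terms rather than from a pure maximisation.
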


\begin{remark}
The estimate should be compared with results of Stong~\cite{stong-average-order} and Schmutz~\cite{schmutz-typical-order} for $G = \GL(n,q)$ when $n$ is large compared to $q$. Stong proved that $\ord g$ is on average $q^n / n^{1+o(1)}$, while Schmutz proved that $\ord g$ is typically $q^{n - (\log n)^{2+o(1)}}$. Theorem~\ref{inverse-orders-theorem} asserts that the harmonic mean of $\ord g$, which is precisely $\eta_G^{-1}$, is $e^{(2+o(1))\sqrt{n \log n \log q}}$. In other words, we have
\begin{align*}
    \log \ptext{mean} &= n \log q - \log n + o(\log n), \\
    \log \ptext{median} &= n \log q - (\log n)^{2 + o(1)} \log q , \\
    \log \ptext{harmonic mean} &= (2+o(1))\sqrt{n \log n \log q}.
\end{align*}
It is rather striking how much smaller the harmonic mean is.
\end{remark}

\subsection{Basic observations about \texorpdfstring{$\harm_G$}{eta\_G}}

We collect here a few basic observations about $\harm_G$ for general groups $G$.

\begin{lemma} \label{subgroup-lemma}
If $H$ is a subgroup of $G$ then $\harm_H \leq [G: H] \, \harm_G$.
\end{lemma}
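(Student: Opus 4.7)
The plan is to unfold both sides of the inequality from the definition of $\harm$ and reduce to a trivial comparison of sums of non-negative terms. Writing out the definitions,
\[
  \harm_H = \frac{1}{|H|} \sum_{h \in H} \frac{1}{\ord h}, \qquad [G:H]\,\harm_G = \frac{|G|}{|H|} \cdot \frac{1}{|G|} \sum_{g \in G} \frac{1}{\ord g} = \frac{1}{|H|} \sum_{g \in G} \frac{1}{\ord g},
\]
so after multiplying by $|H|$ the claim is equivalent to $\sum_{h \in H} 1/\ord h \leq \sum_{g \in G} 1/\ord g$.

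Since $H \subseteq G$ and every summand $1/\ord g$ is non-negative, this last inequality is immediate: the left-hand sum is a subsum of the right-hand sum. There is no real obstacle here; the only content of the lemma is the correct bookkeeping of the normalizing factors $1/|H|$ versus $1/|G|$, which is exactly compensated by the index $[G:H]$.
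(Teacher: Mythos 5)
Your proof is correct, and since the paper states this lemma without proof as a ``basic observation,'' this unwinding of the definitions is surely the intended argument. The one implicit point worth noting is that $\ord h$ is unambiguous: the order of $h$ as an element of $H$ coincides with its order as an element of $G$, so the sums really do compare term by term.
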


\begin{lemma} \label{quotient-lemma}
If $K$ is a quotient of $G$ then $\harm_G \leq \harm_K \leq \frac{|G|}{|K|} \, \harm_G$.
\end{lemma}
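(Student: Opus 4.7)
The plan is to write $K = G/N$ for some $N \trianglelefteq G$, with quotient map $\pi\colon G \to K$, and to compare the two averages after grouping the terms of $\sum_{g \in G} 1/\ord g$ by cosets of $N$. For the first inequality $\harm_G \leq \harm_K$, I would use the elementary fact that $\ord \pi(g)$ divides $\ord g$ (since $\pi(g)^{\ord g} = \pi(g^{\ord g}) = 1$), so $1/\ord g \leq 1/\ord \pi(g)$. Summing over $g \in G$ and collecting the $|N|$ identical contributions from each fibre yields $\sum_{g} 1/\ord g \leq |N| \sum_{k \in K} 1/\ord k$; dividing by $|G| = |K||N|$ gives the inequality. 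This step is routine and has no real content.

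The second inequality $\harm_K \leq (|G|/|K|) \harm_G$ is equivalent, after clearing denominators, to $\sum_{k \in K} 1/\ord k \leq \sum_{g \in G} 1/\ord g$, and splitting the right-hand side by cosets of $N$ reduces everything to the coset-wise estimate
\[
  \sum_{h \in gN} \frac{1}{\ord h} \geq \frac{1}{\ord(gN)}
\]
for each $g \in G$. This is the only step with any content, so it is where I expect the small obstacle to sit. To prove it I would observe that every $h \in gN$ lies in the subgroup $H = \pi^{-1}(\langle gN \rangle) \leq G$; by the correspondence theorem $|H| = |N| \cdot \ord(gN)$, so $\ord h$ divides $|H|$ and in particular $\ord h \leq |N| \cdot \ord(gN)$. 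Hence $1/\ord h \geq 1/(|N| \ord(gN))$ for each of the $|N|$ elements of the coset, and summing gives the lower bound $1/\ord(gN)$ exactly. Taking this back through the coset decomposition concludes the proof; the whole argument amounts to coset-by-coset bookkeeping, with the one real input being that the preimage of the cyclic subgroup $\langle gN \rangle$ has the expected size $|N|\cdot\ord(gN)$.
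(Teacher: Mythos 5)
Your proof is correct and complete; the paper states this lemma without proof, treating it as routine. Both inequalities follow exactly as you argue: the first from $\ord\pi(g)\mid\ord g$, and the second from the observation that every element of the coset $gN$ lies in $\pi^{-1}(\langle gN\rangle)$, a subgroup of order $|N|\cdot\ord(gN)$, so each of the $|N|$ summands $1/\ord h$ in that coset is at least $1/(|N|\ord(gN))$.
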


The combination of the above two lemmas enables us to almost completely restrict attention to $\GL(n,q)$.

\begin{lemma} \label{cclreps}
Let $g_1, \dots, g_k$ be a complete set of conjugacy class representatives in $G$. Then
\[
  \harm_G = \sum_{i=1}^k \frac{1}{|C_G(g_i)|} \frac{1}{\ord g_i}.
\]
\end{lemma}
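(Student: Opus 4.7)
The plan is to partition the sum defining $\harm_G$ according to conjugacy classes and then apply the orbit--stabilizer theorem. The two key facts I will use are: (i) the function $g \mapsto 1/\ord g$ is a class function (since conjugate elements have the same order), and (ii) the conjugacy class of $g_i$ has size $|G|/|C_G(g_i)|$ by orbit--stabilizer applied to the conjugation action of $G$ on itself.

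Concretely, starting from the definition
\[
  \harm_G = \frac{1}{|G|} \sum_{g \in G} \frac{1}{\ord g},
\]
I would group the terms by conjugacy class, writing the sum as
\[
  \frac{1}{|G|} \sum_{i=1}^k \sum_{g \in \op{Cl}(g_i)} \frac{1}{\ord g}
  = \frac{1}{|G|} \sum_{i=1}^k |\op{Cl}(g_i)| \cdot \frac{1}{\ord g_i},
\]
using (i) to pull $1/\ord g_i$ out of the inner sum. Substituting $|\op{Cl}(g_i)| = |G|/|C_G(g_i)|$ from (ii) and cancelling the factor $|G|$ yields the claimed identity.

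This is essentially a bookkeeping lemma, and I do not anticipate any real obstacle; the only thing to verify carefully is that every element of $G$ is counted exactly once, which is immediate from the fact that $g_1,\dots,g_k$ form a complete set of conjugacy class representatives so that the conjugacy classes $\op{Cl}(g_1),\dots,\op{Cl}(g_k)$ partition $G$.
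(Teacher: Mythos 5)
Your proof is correct, and it is the obvious bookkeeping argument: group the sum by conjugacy classes, use that $\ord$ is a class function, and apply orbit--stabilizer. The paper states this lemma without proof (as one of several "basic observations"), so there is nothing to compare against; your argument is exactly what is intended.
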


\begin{lemma} \label{cyclic-group}
If $C_n$ is the cyclic group of order $n$ then
\[
  \harm_{C_n} = \frac1n \sum_{d \mid n} \frac{\totient(d)}{d} \leq \frac{\divisor(n)}{n}.
\]
\end{lemma}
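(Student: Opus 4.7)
The plan is entirely elementary. In $C_n$, the number of elements of order $d$ for each divisor $d \mid n$ is exactly $\totient(d)$, and every element has order dividing $n$. So I would partition the defining sum for $\harm_{C_n}$ according to the order of the element:
\[
  \harm_{C_n} = \frac{1}{n} \sum_{g \in C_n} \frac{1}{\ord g} = \frac{1}{n} \sum_{d \mid n} \sum_{\substack{g \in C_n \\ \ord g = d}} \frac{1}{d} = \frac{1}{n} \sum_{d \mid n} \frac{\totient(d)}{d},
\]
which is the claimed identity.

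For the inequality, I simply use the trivial bound $\totient(d) \leq d$, so each summand $\totient(d)/d$ is at most $1$, and the number of summands is $\divisor(n)$. This immediately gives $\harm_{C_n} \leq \divisor(n)/n$.

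There is no real obstacle here: the identity is just the standard count of elements by order in a cyclic group, and the inequality is a one-line estimate. The lemma is stated purely so it can be cited later when bounding $\harm_G$ for abelian subgroups (e.g., tori) inside $\GL(n,q)$.
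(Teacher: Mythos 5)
Your proof is correct and is the standard argument; the paper states this lemma without proof (as one of several ``basic observations''), and what you wrote is exactly what is implicitly intended: count elements of each order $d \mid n$ (there are $\totient(d)$ of them), then bound each ratio $\totient(d)/d$ by $1$ and the number of summands by $\divisor(n)$.
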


\subsection{Lower bound}

Let $G = \GL(n,q)$. Let $d \in [1, n]$ be an integer, and assume first for simplicity that $d$ divides $n$. Consider the contribution to $\harm_G$ from just those $g \in G$ whose characteristic polynomial splits into $n/d$ distinct irreducible factors of degree $d$. In other words, we are considering just those $g$ which are diagonalizable over $\F_{q^d}$ with distinct eigenvalues, each of which has degree $d$ over $\F_q$. Each such $g$ has order at most $q^d-1$, and
\[
  |C_G(g)| = (q^d - 1)^{n/d} < q^n.
\]
Since there are $q^d /d - O(q^{d/2} / d)$ irreducible polynomials of degree $d$, by Lemma~\ref{cclreps} the total contribution to $\eta_G$ is at least
\begin{align*}
  \binom{q^d / d - O(q^{d/2}/d)}{n/d} \frac{1}{q^{n + d}}
  &\geq \left(\frac{q^d / d - O(q^{d/2}/d)}{n/d} \right)^{n/d} \frac{1}{q^{n + d}} \\ 
  &= \frac1{n^{n/d} q^d} \left( 1 - O(q^{-d/2}) \right)^{n/d} \\
  &= \frac1{n^{n/d} q^d} e^{-O((n/d) q^{-d/2})}.
\end{align*}
To optimize $n^{-n/d} q^{-d}$ we should take
\[
  d \sim \sqrt{n \log n / \log q},
\]
and the error term is clearly negligible for this choice, so we get
\[
  \eta_G \geq \exp(-(2+o(1)) \sqrt{n \log n \log q}).
\]
This proves the lower bound whenever $n$ has a divisor $d$ of the right size.

In general $n$ need not have such a divisor, so we have to tweak the construction slightly. Let $r = n - \floor{n/d} d$, and assume that $d$ has a divisor of size
\[
  d_1 \sim n^{1/10} / \log q,
\]
let $r_1 = r - \floor{r/d_1} d_1$, and consider the contribution from those $g \in G$ whose characteristic polynomial splits into
\begin{enumerate}
    \item $\floor{n/d}$ distinct irreducible factors of degree $d$,
    \item $\floor{r/d_1}$ distinct irreducible factors of degree $d_1$,
    \item $r_1$ copies of the linear factor $X-1$.
\end{enumerate}
Still we have $\ord g \leq q^d-1$, and
\[
    |C_G(g)| = (q^d - 1)^\floor{n/d} (q^{d_1} - 1)^{\floor{r/d_1}} |\GL(r_1, q)| < q^{n - r_1 + r_1^2}.
\]
Thus the contribution to $\eta_G$ is at least
\begin{align*}
  \binom{q^d / d - O(q^{d/2}/d)}{\floor{n/d}} & \binom{q^{d_1} / d_1 - O(q^{d_1/2}/d_1)}{\floor{r/d_1}} \frac{1}{q^{n - r_1+r_1^2 + d}} \\
  & \gg \frac{q^{\floor{n/d} d}}{n^{\floor{n/d}}} \frac{q^{\floor{r/d_1} d_1}}{r^{\floor{r/d_1}}} \frac1{q^{n-r_1+r_1^2+d}} \\
  &= \frac1{n^{n/d} q^d} \exp(-O((r/d_1) \log r + r_1^2 \log q)) \\
  &\geq \frac1{n^{n/d} q^d} \exp(-O((d/d_1) \log n + d_1^2 \log q)).
\end{align*}
The error term is negligible, so this proves the lower bound.

\subsection{Some background}

\def\frakF{\mathfrak{F}}
\def\calP{\mathcal{P}}

We need to recall some basic theory about conjugacy classes in $\GL(n,q)$, some of which we already touched upon in the previous subsection. Let $\frakF$ denote the set of monic irreducible polynomials over $\F_q$, apart from $X$, and let $\frakF_d$ be set of $f\in \frakF$ of degree $d$. Any $g \in \GL(n,q)$ makes $V = \F_q^n$ into an $\F_q[X]$-module with $X$ acting as $g$. By the structure theorem for finitely generated modules over a PID, $V$ decomposes as a direct sum
\[
  V = \F_q[X] / (f_1^{m_1}) \oplus \cdots \oplus \F_q[X] / (f_l^{m_l}),
\]
where each $f_i \in \frakF$, and $f_1^{m_1}, \dots, f_l^{m_l}$ are uniquely determined up to order. The upshot is that specifying a conjugacy class in $\GL(n,q)$ is equivalent to specifying a multiset of pairs $(f_i, m_i)$, where $f_i \in \frakF$ and $m_i \geq 1$, such that $\sum_i m_i \deg f_i = n$.\footnote{Another way of expressing this is to say that conjugacy classes in $\GL(n,q)$ correspond to maps $\nu$ which assign to each $f \in \frakF$ some partition of some positive integer, such that $\sum_{f \in \frakF} |\nu(f)| \deg f = n$ (where $|\lambda|$ denotes the total size of the partition $\lambda$).}

Over $\overline{\F}_q$ this leads to the Jordan normal form of $g$, which gives us a formula for the order of $g$ in terms of the invariants $f_1^{m_1}, \dots, f_l^{m_l}$. For $f \in \frakF$, write $\ord f$ for the multiplicative order of any root of $f$. Then
\begin{equation} \label{order-formula}
  \ord g = \lcm(\ord f_1, \dots, \ord f_l) p^t,
\end{equation}
where $p$ is the prime dividing $q$ and $p^t$ is the smallest power of $p$ such that $p^t \geq \max(m_1, \dots, m_l)$.

For all $g \in G$ we have $|C_G(g)| \gg q^n / n^2$ (Fulman--Guralnick~\cite[Theorem~6.4]{fulman--guralnick}), so by the above correspondence and Lemma~\ref{cclreps} we have
\[
  \harm_G \ll \frac{n^2}{q^n} \sum_{\substack{f_1^{m_1}, \dots, f_l^{m_l} \\ \sum m_i \deg f_i = n }}
  \frac1{\lcm(\ord f_1, \dots, \ord f_l)},
\]
it being understood that the sum extends over all choices of $f_1^{m_1}, \dots, f_l^{m_l}$ up to order.

Our strategy for bounding this sum is to combine two special cases. Let $\harm_1$ be the restriction of the above sum to the case in which $m_i = 1$ for each $i$:
\[
  \harm_1 = \sum_{\substack{f_1, \dots, f_l \\ \sum \deg f_i = n}} \frac1{\lcm(\ord f_1, \dots, \ord f_l)}.
\]
At the opposite extreme, consider the case in which $m_i \geq 2$ for each $i$. Let $\harm_2$ be the sheer number of terms in that case:
\[
  \harm_2 = \sum_{\substack{f_1^{m_1}, \dots, f_l^{m_l} \\ m_i \geq 2, \sum m_i \deg f_i = n}} 1.
\]
We will bound $\harm_1$ and $\harm_2$ using two different arguments, and then we will bound $\harm_G$ via (with obvious notation)
\begin{equation} \label{harm-splitting}
  \harm_G \ll \frac{n^2}{q^n} \sum_{n_1+n_2 = n} \harm_1(n_1) \harm_2(n_2).
\end{equation}

\subsection{A lemma about random permutations}

In this subsection, for the moment something of a non sequitur, we prove the following lemma about random permutations. This will turn out to be an essential ingredient in the next subsection, in which we bound $\harm_1$.

\begin{lemma} \label{nonsequitur}
Let $q$ be a prime power, let $m$ be a positive integer, and let $\pi \in S_n$ be a random permutation. Then the probability that every cycle length of $\pi$ is contained in $D_m = \{d \leq n: q^d - 1 \mid m\}$ is bounded by $n^{-n/\log_q (m+1)} e^{O(n/\log_q m + n^{1/2})}$.
\end{lemma}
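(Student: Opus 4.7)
The plan is to apply the exponential-generating-function method. By the exponential formula, the number of $\pi \in S_n$ whose cycle lengths all lie in $D_m$ equals $n!\, [x^n]\exp(S(x))$, where $S(x) = \sum_{d \in D_m} x^d/d$; dividing by $n!$ expresses the probability in question as $[x^n]\exp(S(x))$. Since $\exp(S(x))$ has non-negative power series coefficients, for any $r > 0$ one has $[x^n]\exp(S(x)) \leq \exp(S(r))/r^n$.

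Writing $D = \log_q(m+1)$, I would take $r = n^{1/D}$, so that $r^n = n^{n/D}$; the task reduces to proving $S(n^{1/D}) = O(n/D + n^{1/2})$. Two structural facts about $D_m$ enter. First, $D_m$ is closed under taking divisors (if $e \mid d$ and $d \in D_m$, then $q^e - 1 \mid q^d - 1 \mid m$, so $e \in D_m$). Second, $\sum_{d \in D_m} \totient(d) \leq D + O(\log D)$: indeed $\lcm_{d \in D_m}(q^d - 1) \mid m$, and (combining divisor-closedness with the cyclotomic factorization $q^d - 1 = \prod_{e \mid d} \Phi_e(q)$) this lcm equals $\prod_{d \in D_m} \Phi_d(q)$; taking $\log_q$ and using $\log_q \Phi_d(q) = \totient(d) + O(1)$ yields the bound.

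These constraints make $D_m$ sparse inside $\{1,\dots,D\}$. A dyadic split of $S(n^{1/D}) = \sum_{d \in D_m} n^{d/D}/d$ then does the job. Terms with $d \geq D/2$ contribute at most $2n/D$ each, and by $\totient(d) \gg d/\loglog d$ together with the $\totient$-sum bound there are only $O(\loglog D)$ such terms, contributing $O(n \loglog D/D)$ in total. Terms with $d < D/2$ satisfy $n^{d/D} \leq n^{1/2}$, and their total contribution is bounded by $n^{1/2} \sum_{d \in D_m,\, d < D/2} 1/d$, which is again controlled via the $\totient$-sum constraint and a $\divisor$-bound.

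The main technical obstacle is the clean absorption of the various logarithmic factors into the stated $O(n/D + n^{1/2})$ form. A useful simplification: if $L := \max D_m$ is strictly smaller than $D$, then taking $r = n^{1/L}$ in place of $n^{1/D}$ yields the stronger bound $n^{-n/L}\exp(O(n/L + n^{1/2}))$, which a fortiori implies the stated one. This reduces to the cleanest case $L = D$, in which divisor-closedness together with the $\totient$-sum constraint forces $D_m$ to be exactly the set of divisors of $D$; writing $d = D/k$, one computes $S(n^{1/D}) = D^{-1}\sum_{k \mid D} k n^{1/k}$, where the $k=1$ term gives $n/D$ and the remaining terms are $O(n^{1/2})$ (using that $k n^{1/k}$ is at most $O(n^{1/2})$ for $k \geq 2$ in the relevant range, and that $\divisor(D) = D^{o(1)}$).
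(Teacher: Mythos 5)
Your starting point -- the exponential-formula identity $p_n = [z^n]\exp\sum_{d\in D_m}z^d/d$ and the tail bound $p_n \leq r^{-n}\exp\sum_{d\in D_m} r^d/d$ -- coincides with the paper's, but the way you then control the sum is genuinely different. The paper's proof isolates three elements of $D_m$ ($d_1 = \max D_m$, $d_2 =$ the largest element not dividing $6d_1$, $d_3 =$ the largest dividing neither $6d_1$ nor $6d_2$), proves via a short $\gcd$ computation that $d_3 \leq \frac49\log_q m$, and splits the sum into three ranges accordingly. You instead exploit divisor-closedness of $D_m$ and the fact that $\lcm_{d\in D_m}(q^d - 1) = \prod_{d\in D_m}\Phi_d(q)$ divides $m$, yielding the $\totient$-budget $\sum_{d\in D_m}\totient(d) \lesssim \log_q m$. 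The cyclotomic route is appealing and I believe it can be made to work, but as written there are gaps that keep it from delivering the stated bound.

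The most serious issues: \emph{(i)} The ``useful simplification'' of switching to $r = n^{1/L}$ when $L = \max D_m < D$ hinges on $S(n^{1/L}) = O(n/L + n^{1/2})$, which is false in general. Take $D_m = \{1,\dots,L\}$ with $L = (\log n)^2$; then $S(n^{1/L}) \asymp n/\log n$, much larger than $n/L + n^{1/2}$. (In that example $D \asymp L^2$ and $r = n^{1/D}$ works fine, giving $S = O(\log D)$, so the detour hurts you; and it also means your ``cleanest case'' computation, which assumes $D_m$ is exactly the divisor set of an integer $D$, does not cover the actual general case -- $D_m$ can contain many small elements beyond the divisors of $L$.) \emph{(ii)} Your dyadic split as stated yields $O(n\loglog D/D + n^{1/2}\log D)$ rather than $O(n/D + n^{1/2})$. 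The $\loglog D$ is harmless, but the $n^{1/2}\log D$ term is fatal in the application: at the optimizing $\log m \approx \sqrt{n\log n\log q}$ one has $\log D \asymp \log n$, and $n^{1/2}\log n$ is \emph{not} $o(\sqrt{n\log n\log q})$ for fixed $q$ and $n\to\infty$, so this error would swallow the main term in Theorem~\ref{inverse-orders-theorem}. A more careful split that weights each range of $d$ by $n^{d/D}$ instead of crudely using $n^{1/2}$ should remove the logarithm, but that bookkeeping is precisely what the paper's $d_1,d_2,d_3$ device is designed to avoid. \emph{(iii)} The identity $\lcm_{d\in D_m}(q^d-1) = \prod_{d\in D_m}\Phi_d(q)$ for divisor-closed $D_m$ and the uniform estimate $\log_q\Phi_d(q) = \totient(d) + O(1)$ (which follows from $\Phi_d(x) = \prod_{e\mid d}(x^e-1)^{\mu(d/e)}$) are both asserted without proof; each is true but each needs justification. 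Also, the error in the $\totient$-budget is really $D + O(|D_m|)$, not $D + O(\log D)$, and $|D_m|$ can be as large as $\Theta(\sqrt D)$ (e.g.\ $D_m = \{1,\dots,k\}$ with $k \asymp \sqrt D$).
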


For any $d$, the probability that every cycle of $\pi$ has length at most $d$ is bounded by $1/\floor{n/d}! \approx (n/d)^{-n/d}$. The point of the lemma is that we may improve this to roughly $n^{-n/d}$ under certain stronger demands about the cycles.

\begin{proof}
Define $d_1, d_2, d_3$ as follows.
\begin{enumerate}
    \item Let $d_1 = \max D_m$.
    \item Let $d_2$ be the largest element of $D_m$ which does not divide $6 d_1$. If there is no such $d_2$ then let $d_2 = 0$.
    \item Let $d_3$ be the largest element of $D_m$ which does not divide $6 d_1$ or $6 d_2$. If there is no such $d_3$ then let $d_3 = 0$.
\end{enumerate}

We claim that
\begin{equation}\label{eqn:d3-bound}
  d_3 \leq \frac49 \log_q m.
\end{equation}
Indeed, assume $d_2, d_3 > 0$. Every $d \in D_m$ either divides $6 d_1$, divides $6 d_2$, or is at most $d_3$. Moreover, $m$ is divisible by $\lcm(q^{d_1}-1, q^{d_2}-1, q^{d_3} - 1)$. Since
\begin{align*}
  \gcd(q^{d_1} - 1, q^{d_2} - 1) = q^{\gcd(d_1, d_2)} - 1 \leq q^{d_2 / 4} - 1, \\
  \gcd(q^{d_1} - 1, q^{d_3} - 1) = q^{\gcd(d_1, d_3)} - 1 \leq q^{d_3 / 4} - 1, \\
  \gcd(q^{d_2} - 1, q^{d_3} - 1) = q^{\gcd(d_2, d_3)} - 1 \leq q^{d_3/4} - 1,
\end{align*}
we have
\begin{align*}
  m
  &\geq \lcm(q^{d_1}-1, q^{d_2}-1, q^{d_3}-1) \\
  &\geq \frac{(q^{d_1} - 1)(q^{d_2}-1)(q^{d_3}-1)}{(q^{d_2/4} - 1)(q^{d_3/4} - 1) (q^{d_3 / 4} - 1)} \\
  &\geq q^{d_1 + 3d_2/4 + d_3/2} \\
  &\geq q^{(9/4)d_3}.
\end{align*}
The bound \eqref{eqn:d3-bound} follows.

Now let $p_n$ be be the probability that every cycle length of $\pi \in S_n$ is contained in $D_m$. Then $p_n$ is the coefficient of $z^n$ in
\[
  \prod_{d \in D_m} \sum_{c = 0}^\infty \frac{1}{c!} \pfrac{z^d}{d}^c = \exp \sum_{d \in D_m} \frac{z^d}{d},
\]
so for $r > 0$ we have
\[
  p_n \leq r^{-n} \exp \sum_{d \in D_m} \frac{r^d}{d}.
\]
Hence
\begin{align*}
  p_n
  &\leq r^{-n} \exp \left( \sum_{\substack{d \mid 6d_1 \\ d \leq d_1}} \frac{r^d}{d} + \sum_{\substack{d \mid 6d_2 \\ d \leq d_1}} \frac{r^d}{d} + \sum_{d \leq d_3} \frac{r^d}{d} \right).
\end{align*}
Assume $r \geq 1$. Then since only $O(1)$ divisors of $6d_1$ are at least $d_1/10$, we have
\[
  \sum_{\substack{d \mid 6d_1 \\ d \leq d_1}} \frac{r^d}{d}
  \leq \sum_{\substack{d \mid 6d_1 \\ d_1/10 < d \leq d_1}} \frac{r^d}{d}
  + \sum_{\substack{d \leq d_1/10}} \frac{r^d}{d}
  \leq O(r^{d_1}/d_1 + r^{d_1/ 10} \log d_1),
\]
and similarly for $d_2$, while
\[
  \sum_{d \leq d_3} \frac{r^d}{d} \leq O(r^{d_3} \log d_3).
\]
Hence
\[
  p_n \leq r^{-n} \exp O\left( r^{d_1}/d_1 + r^{d_1/10} \log d_1 + r^{d_3} \log d_3 \right).
\]

Put $r = n^{1/\log_q(m+1)}$. Since $d_1 \leq \log_q (m+1)$, and since $r^x/x$ has a unique local minimum, we have
\[
  r^{d_1} / d_1 \leq r^{\log_q(m+1)} / \log_q(m+1) + r = O(n/\log_q(m+1) + n^{1/\log_q(m+1)}).
\]
The first term dominates unless $\log_q(m+1)$ is at least comparable to $n$, so
\[
  r^{d_1} / d_1 = O(n / \log_q(m+1) + 1).
\]
Since $d_3 \leq \frac49 \log_q m$, we get
\[
  p_n \leq 
  n^{-n/\log_q (m+1)} \exp O(n / \log_q (m+1) + n^{1/10+o(1)} + n^{4/9 + o(1)}).
\]
The error terms here are even smaller than claimed.
\end{proof}

\subsection{Bounding \texorpdfstring{$\harm_1$}{eta\_1}} \label{subsec:harm_1}

Write $d_i = \deg f_i$, and write $c_d$ for the number of $i$ with $d_i = d$. Then
\[
  \harm_1
  = \sum_{(d_1, \dots, d_l) \vdash n}
  \frac1{c_1! \cdots c_n!} \sum_{f_1 \in \frakF_{d_1}} \cdots \sum_{f_l \in \frakF_{d_l}} \frac1{\lcm(\ord f_1, \dots, \ord f_l)}.
\]
If $f \in \frakF_d$, then $\ord f = \ord x$ for any root $x$ of $f$. Note that $x \in \F_{q^d}^\times$, and the map which sends each $x\in \F_{q^d}^\times$ of degree $d$ to its minimal polynomial is $d$-to-$1$. Thus rewriting the sum over $f \in \frakF_d$ as a sum over $x \in \F_{q^d}^\times$ of degree $d$, we get
\begin{align*}
  \harm_1
  &\leq \sum_{(d_1, \dots, d_l) \vdash n}
  \frac1{c_1! \cdots c_n! d_1 \dots d_l}
  \sum_{x_1 \in \F_{q^{d_1}}^\times} \cdots \sum_{x_l \in \F_{q^{d_l}}^\times} \frac1{\lcm(\ord x_1, \dots, \ord x_l)} \\
  &=\sum_{(d_1, \dots, d_l) \vdash n}
  \frac1{c_1! \cdots c_n! d_1 \dots d_l} |A| \harm_A,
\end{align*}
where $A$ is the abelian group
\[
  A = C_{q^{d_1} - 1} \times \cdots \times C_{q^{d_l} - 1}.
\]
Write
\begin{align*}
  a_1 &= q^{d_1}-1, \\
  a_2 &= (q^{d_2}-1) / \gcd(a_1, q^{d_1}-1), \\
  a_3 &= (q^{d_3}-1) / \gcd(a_1a_2, q^{d_2} - 1), \\
  &\vdots \\
  a_l &= (q^{d_l}-1) / \gcd(a_1 \cdots a_{l-1}, q^{d_l}-1)
\end{align*}
(these are the coprime parts of $q^{d_i} - 1$). Then there is a homomorphism of $A$ onto
\[
  B = C_{a_1} \times \cdots \times C_{a_l} \cong C_{\lcm(q^{d_1} - 1, \dots, q^{d_l}-1)}.
\]
Thus we deduce from Lemmas~\ref{quotient-lemma} and \ref{cyclic-group} that
\begin{align*}
  \harm_1
  &\leq q^n \sum_{(d_1, \dots, d_l)\vdash n} \frac1{c_1! \cdots c_n! d_1 \dots d_l} \frac{\divisor(\lcm(q^{d_1}-1, \dots, q^{d_l}-1))}{\lcm(q^{d_1}-1, \dots, q^{d_l} - 1)} \\
  &\leq q^n \sum_{m\in L} \frac{\divisor(m)}{m} \sum_{\substack{(d_1, \dots, d_l)\vdash n \\ q^{d_i} - 1 \mid m}} \frac1{c_1! \cdots c_n! d_1 \dots d_l} ,
\end{align*}
where $L$ is the set of all possible $\lcm$'s
\[
  m = \lcm(q^{d_1} - 1, \dots, q^{d_l} - 1) \qquad ((d_1, \dots, d_l) \vdash n).
\]

We recognize the inner sum here as the object of Lemma~\ref{nonsequitur}. Thus
\[
  \harm_1 \leq q^n \sum_{m \in L} \frac{\divisor(m)}{m} n^{-n/\log_q (m+1)} e^{O(n/\log_q m + n^{1/2})}.
\]
Applying the divisor bound \eqref{divisor-bound}, we get
\[
  \harm_1 \leq q^n \sum_{m \in L}
  \exp\left(
    -\log m - \frac{n \log n \log q}{\log (m+1)} + O\left(\frac{\log m}{\loglog m} + \frac{n \log q}{\log m} + n^{1/2}\right)
  \right).
\]
It is easy to see that the maximum occurs at $\log m \approx \sqrt{n \log n \log q}$. Since by \eqref{partition-bound} we have
\[
  |L| \leq p(n) \leq e^{O(n^{1/2})},
\]
we deduce that
\begin{equation} \label{harm_1-bound}
  \harm_1 \leq q^n \exp\left(
  -2\sqrt{n \log n \log q}
  + O\left(
      \sqrt{n \log q}
    \right)
  \right).
\end{equation}

\subsection{Bounding \texorpdfstring{$\harm_2$}{eta\_2}}

We have to choose a multiset of pairs $(f_i, m_i)$, $f_i \in \frakF$, $m_i \geq 2$, such that $\sum m_i \deg f_i = n$. Let $\mu_i = m_i \deg f_i$, and write $c_\mu$ for the number of $i$ with $\mu_i = \mu$. Then, since $|\frakF_d| \leq q^d/d$,
\[
  \harm_2 \leq \sum_{(\mu_1, \dots, \mu_l) \vdash n} \frac1{c_1! \cdots c_n!} \prod_{i=1}^l \sum_{d_i \mid \mu_i, d_i < \mu_i} \frac{q^{d_i}}{d_i}.
\]
Since $q^d / d$ is increasing in $d$ for $q \geq 2$, $d \geq 1$, and since $\divisor(x) \leq x/2 + 1$ for all $x$, we have
\begin{align*}
  \harm_2
  &\leq \sum_{(\mu_1, \dots, \mu_l) \vdash n}
  \frac1{c_1! \cdots c_n!}
  \prod_{i=1}^l \sum_{d_i \mid \mu_i, d_i < \mu_i}
  \frac{q^{\mu_i/2}}{\mu_i/2} \\
  &\leq \sum_{(\mu_1, \dots, \mu_l) \vdash n}
  \frac1{c_1! \cdots c_n!}
  \prod_{i=1}^l q^{\mu_i/2} \\
  &\leq p(n) \, q^{n/2}.
\end{align*}
Thus by \eqref{partition-bound},
\begin{equation} \label{harm_2-bound}
  \harm_2 \leq e^{O(n^{1/2})} q^{n/2}.
\end{equation}

We can now deduce Theorem~\ref{inverse-orders-theorem}, in the case $G = \GL(n,q)$. From \eqref{harm-splitting}, \eqref{harm_1-bound}, and \eqref{harm_2-bound} we have
\[
  \harm_G \leq \sum_{n_1 + n_2 = n} \exp\left(- 2\sqrt{n_1 \log n_1 \log q} - \frac12 n_2 \log q + O(\sqrt{n \log q}) \right)
\]
Split the summation according to whether $n_2 \geq \sqrt{n} \log n$, say. Note that if $n_2 \leq \sqrt{n} \log n$ then, since $\sqrt{x \log x}$ has derivative comparable to $x^{-1/2} (\log x)^{1/2}$, we have
\[
  2\sqrt{n_1 \log n_1}
  = 2\sqrt{n \log n} - O((\log n)^{3/2}).
\]
The claimed bound follows.

\subsection{The groups \texorpdfstring{$\PGL(n, q)$, $\SL(n,q)$, and $\PSL(n, q)$}{PGL(n,q), SL(n,q), and PSL(n,q)}}

When $n$ is large compared to $\log q$, the cases of $\PGL(n,q)$, $\SL(n,q)$, and $\PSL(n,q)$ follow from the case of $\GL(n,q)$ and Lemmas~\ref{subgroup-lemma} and \ref{quotient-lemma}, because a factor of $q$ is negligible. But when $n$ is at most comparable to $\log q$ then we cannot be blas\'e about factors of $q$, so we have to review the proof.

Consider first the case of $\PGL(n, q)$. We can repeat the analysis of Subsection~\ref{subsec:harm_1}, now measuring order in $\PGL$. We need to modify the analysis only for the discrete partition $(1, \dots, 1)$. But that term is just the sum, over all $x_1, \dots, x_n \in \F_q^\times$, of the projective order of the diagonal matrix with entries $x_1, \dots, x_n$, which is exactly $(q-1)^n \harm_A$ for $A = {\F_q^\times}^{n-1}$. By Lemmas~\ref{quotient-lemma} and \ref{cyclic-group} this is bounded by $q^n \divisor(q-1) / (q-1)$, provided of course that $n > 1$. As a result we find that
\[
  \eta_1 \leq q^n e^{O(n^{1/2})} \frac{\divisor(q-1)}{q},
\]
which is more than sufficient to finish the proof.

Now consider $\SL(n, q)$. Again we can repeat the analysis of Subsection~\ref{subsec:harm_1}, now restricting to $g$ with $\det g = 1$. Again we need to reconsider the discrete partition $(1, \dots, 1)$ in Subsection~\ref{subsec:harm_1}. In this case that term is
\[
  \sum_{\substack{x_1, \dots, x_n \in \F_q^\times \\ x_1 \dots x_n = 1}} \frac1{\lcm(\ord_q x_1, \dots, \ord_q x_n)},
\]
which we again recognize as $(q-1)^{n-1} \harm_A$ for $A = {\F_q^\times}^{n-1}$ provided that $n > 1$, and we continue as before.

Finally, note $\harm_{\PSL(n,q)} \leq 2 \, \harm_{\SL(n,q)}$, and we can continue to be blas\'e about factors of $2$. This proves the remaining cases of Theorem~\ref{inverse-orders-theorem}.

\section{Using character bounds}

Following \cite{liebeck--shalev--99, larsen-shalev-tiep}, the \emph{support} of an element $g \in \GL(n,q)$ is defined to be the codimension of the largest eigenspace of $\pi$:
\def\supp{\op{supp}}
\[
  \supp g = \min_{\lambda \in \overline{\F}_q} \op{codim} \op{ker} (g - \lambda).
\]
Note that if $\supp g < n/2$ then $\lambda \in \F_q$. We will use the following bound from \cite{larsen-shalev-tiep}:

\begin{theorem}[Larsen--Shalev--Tiep]\label{larsen-shalev-tiep-bound}
Let $G = \SL(n,q)$. If $\chi\in\Irr(G)$ and $\chi \neq 1$ then, for all $g \in G$,
\[
  \frac{|\chi(g)|}{\chi(1)} \leq q^{-\sqrt{|\supp{g}|} / 481}.
\]
\end{theorem}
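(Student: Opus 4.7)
The theorem is attributed to Larsen--Shalev--Tiep, so I sketch the strategy one would take to prove a bound of this shape rather than proving it from scratch. The natural framework is Deligne--Lusztig theory, stratifying the irreducible characters of $\SL(n,q)$ by a notion of ``level''.

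First, I would parametrize $\Irr(\SL(n,q))$ via Deligne--Lusztig induction: every $\chi$ occurs, up to sign, as a constituent of a virtual character $R_T^\theta$ for some $F$-stable maximal torus $T$ and character $\theta \in \hat T$. The degree $\chi(1)$ can be bounded below by a power of $q$ depending on the Weyl-group orbit of $(T,\theta)$, which suggests defining a level $\ell(\chi) \in [0,n]$ satisfying $\chi(1) \gg q^{c\,\ell(\chi)(n-\ell(\chi))}$ for some explicit $c > 0$.

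Next, I would estimate $|\chi(g)|$ via the Deligne--Lusztig character formula, which expresses $\chi(g)$ through the \'etale cohomology of the fixed-point subvariety of $g$ acting on an appropriate Deligne--Lusztig variety. If the largest eigenspace of $g$ has codimension $s = \supp g$, then $g$-invariant flags of the relevant type are rare, because each invariant subspace must meet a fixed codimension-$s$ subspace nontrivially; a careful accounting shrinks the fixed locus by a factor of roughly $q^{-\gamma s}$ per unit of level. This yields a bound of the shape $|\chi(g)|/\chi(1) \leq q^{-\gamma\, \ell(\chi)\, s / n}$ for $\ell(\chi)$ sufficiently large.

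The main obstacle is upgrading an exponent linear in $\ell(\chi) s$ to the uniform $\sqrt{s}/481$ exponent in the statement. The standard device is a dichotomy on the level: if $\ell(\chi) \geq \sqrt{s}$, the Deligne--Lusztig bound above already delivers an exponent proportional to $\sqrt{s}$; if $\ell(\chi) < \sqrt{s}$, then $\chi$ is of low dimension and one invokes the classification of small-degree representations (Guralnick--Tiep and precursors) to control $|\chi(g)|$ directly, typically by exhibiting $\chi$ as a constituent of a small permutation character. Pinning down the explicit constant $481$, and handling the unipotent part of $g$ (which affects neither $\supp g$ nor, to leading order, the ratio $|\chi(g)|/\chi(1)$), are the principal bookkeeping challenges.
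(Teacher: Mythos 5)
The paper does not prove this theorem: it is quoted verbatim from Larsen--Shalev--Tiep~\cite{larsen-shalev-tiep} as an external black box, introduced with ``We will use the following bound from \cite{larsen-shalev-tiep}.'' So there is no in-paper proof to compare against, and you correctly treat the statement as a citation rather than something to establish from scratch. Your sketch of the external proof strategy --- parametrizing characters via Deligne--Lusztig theory, stratifying by a ``level'' so that $\chi(1)$ grows like a power of $q$ in the level, bounding $|\chi(g)|$ in terms of how much $\supp g$ shrinks the relevant fixed-point locus, and then running a dichotomy between high-level characters (where the Deligne--Lusztig bound wins) and low-level characters (handled by the classification of small-degree representations) --- is a faithful account of the Guralnick--Larsen--Tiep/Larsen--Shalev--Tiep machinery that produces bounds of exactly this $q^{-c\sqrt{\supp g}}$ shape, including the provenance of the unedifying constant $481$. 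This is the right level of detail for a result the present paper simply invokes; attempting a full proof would be out of scope, and your sketch is not misleading in any respect that matters for how Theorem~\ref{larsen-shalev-tiep-bound} is used (namely, in Theorem~\ref{thm:chietabound}, where only the displayed inequality is needed).
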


We will use this to bound $\langle \chi, \eta \rangle / \chi(1)$.

\begin{theorem} \label{thm:chietabound}
We have the following bound for $|\langle \chi, \eta\rangle | / \chi(1)$ when $G = \SL(n, q)$:
\[
  \chietaratio \leq q^{-c \sqrt{n}} + e^{-(2-o(1)) \sqrt{n \log n \log q}}.
\]
\end{theorem}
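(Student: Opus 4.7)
The plan is to expand $\chietaratio$ by swapping the order of summation in the definition of $\eta$:
\[
  \frac{\langle \chi, \eta \rangle}{\chi(1)}
  = \frac{1}{|G|} \sum_{\sigma \in G} \frac{1}{\ord \sigma} \sum_{k=0}^{\ord \sigma - 1} \frac{\chi(\sigma^k)}{\chi(1)}.
\]
The $k = 0$ summand contributes exactly $\harm_G$, which by Theorem~\ref{inverse-orders-theorem} is at most $e^{-(2-o(1))\sqrt{n \log n \log q}}$, matching the second summand of the claim. For the other terms, applying Theorem~\ref{larsen-shalev-tiep-bound} to each $\sigma^k$ and swapping back reduces the remaining task to bounding
\[
  \frac{1}{|G|} \sum_{g \neq 1} \eta(g) \, q^{-\sqrt{\supp g}/481}
\]
by $q^{-c \sqrt{n}}$.

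The strategy is to threshold on $\supp g$. For $\supp g \geq S$ the character factor is uniformly at most $q^{-\sqrt{S}/481}$, and the elementary identity
\[
  \frac{1}{|G|} \sum_{g \in G} \eta(g) = 1
\]
(which follows from $|\langle \sigma\rangle| \cdot (1/\ord \sigma) = 1$ after swapping $\sum_g \sum_{\sigma: g \in \langle \sigma \rangle}$) bounds the total mass, so taking $S$ of order $n$ yields a large-support contribution of $q^{-c \sqrt n}$ as required.

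For small support ($1 \leq \supp g < S$, together with the non-identity central elements, which have support zero) the LST bound gives no useful saving, so $\eta(g)$ itself must be controlled. Any $\sigma$ with $g \in \langle \sigma\rangle$ centralises $g$, so
\[
  \eta(g) \leq \sum_{\sigma \in C_G(g)} \frac{1}{\ord \sigma} = |C_G(g)| \, \harm_{C_G(g)},
\]
and when $\supp g = s$ the centraliser has a direct factor $\GL(n-s, q)$, giving $\harm_{C_G(g)} \leq \harm_{\GL(n-s, q)}$ via Theorem~\ref{inverse-orders-theorem}. Grouping by conjugacy class (so that $|g^G|/|G| = 1/|C_G(g)|$ cancels the centraliser factor) and enumerating classes of support $\leq s$ by Jordan normal form (at most $q^{s + O(\sqrt s)}$ of them) bounds the small-support contribution by roughly $q^{S + O(\sqrt S)} \harm_G$.

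The hard part is balancing these two pieces. A naive combination allows $S$ only of order $\sqrt{n \log n / \log q}$, which delivers only $q^{-n^{1/4}}$ rather than $q^{-c \sqrt n}$ from the LST side, so the centraliser estimate on $\eta(g)$ must be sharpened in the nearly-scalar case. The key additional constraint is that if $g$ has a $\lambda$-eigenspace of codimension $s$, then any $\sigma \in C_G(g)$ with $\sigma^k = g$ must act on that eigenspace as a $k$-th root of $\lambda I$---a very restrictive condition that should save essentially the full factor $|\GL(n-s, q)|$ in the count of eligible $\sigma$, allowing the threshold $S$ to be pushed up to $\Theta(n)$. This refined control on $\eta$ for almost-scalar $g$ is the main technical obstacle I foresee.
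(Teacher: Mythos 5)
Your overall plan --- threshold on $\supp g$, use the Larsen--Shalev--Tiep bound in the large-support range, and control $\eta(g)$ in the small-support range by exploiting constraints on $\sigma$ with $g\in\langle\sigma\rangle$ --- has the right shape, and the observation you flag at the very end (that such a $\sigma$ must act on the large $\lambda$-eigenspace $U$ of $g$ as a root of a scalar, so only its image in $\PGL(U)$ should matter) is exactly what drives the paper's proof. But as written this is not a proof: you explicitly defer the step that would yield $q^{-c\sqrt{n}}$, and you correctly diagnose that the version you can actually execute only gives about $q^{-n^{1/4}}$. There is also a factual error en route: it is not true that $C_G(g)$ has a direct factor $\GL(n-s,q)$ when $\supp g = s$. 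For instance $g=\lambda I + E_{n-1,n}$ (a scalar plus a single nilpotent Jordan block of size $2$) has $\supp g=1$, but its centraliser is an extension by a unipotent radical, not a direct product with $\GL(n-1,q)$; the direct-product form holds only when $g$ is semisimple with $\F_q$-rational eigenvalues. (Your identity $\frac1{|G|}\sum_g\eta(g)=1$ and the class count $q^{s+O(\sqrt s)}$ are fine.)

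Here is how the paper actually closes the gap, and it is a different bookkeeping than bounding $\eta(g)$ class by class. Take the threshold at $\supp g=n/2$. For the small-support part (which includes $g=1$ and the nontrivial scalars, so no separate handling of central elements is needed) unfold $\eta(g)=\sum_{\sigma:\,g\in\langle\sigma\rangle}1/\ord\sigma$ and sum over pairs $(g,\sigma)$, grouping by the large eigenspace $U$ of $g$. Since $\sigma$ commutes with $g=\sigma^i$, it preserves $U$, so $\sigma\in G_U$; and for a \emph{fixed} $\sigma\in G_U$, the number of $g\in\langle\sigma\rangle$ that act as a scalar on $U$ is exactly $\ord\sigma/\ord p_U(\sigma)$, where $p_U:G_U\to\PGL(U)$. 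Swapping the order of summation therefore gives
\[
  \frac{1}{|G|}\sum_{\substack{g\in G\\ \supp g < n/2}}\eta(g)
  \;\leq\; \sum_{\substack{U\leq\F_q^n\\ \dim U>n/2}}\frac{1}{|G|}\sum_{\sigma\in G_U}\frac{1}{\ord p_U(\sigma)}
  \;=\; \sum_{\substack{U\leq\F_q^n\\ \dim U>n/2}}\frac{1}{[G:G_U]}\,\harm_{p_U(G_U)},
\]
where $p_U(G_U)\cong\PGL(\dim U,q)$ for $\dim U<n$ and $\cong\PSL(n,q)$ for $U=\F_q^n$. There is no residual class-counting factor $q^{S+O(\sqrt S)}$ at all, which is precisely the ``full saving'' you were hoping to extract from the constraint on $\sigma|_U$; Theorem~\ref{inverse-orders-theorem} then finishes. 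The crucial structural difference from your proposal is that the paper never bounds $\eta(g)$ for an individual $g$: the change of variables $(g,\sigma)\leftrightarrow(U,\sigma)$ is performed on the whole sum at once, and this is what lets the threshold stay at $\Theta(n)$.
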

\begin{proof}
By Theorem~\ref{larsen-shalev-tiep-bound}, for $\chi\neq 1$ we have
\begin{align}
  \chietaratio
  &\leq q^{-c \sqrt{n}} + \frac1{|G|} \sum_{\substack{g \in G \\ \supp g < n/2}} \eta(g) \nonumber \\
  &= q^{-c\sqrt{n}} + \frac1{|G|} \sum_{\substack{g \in G \\ \supp g < n/2}} \sum_{\substack{\sigma \in G \\ g \in \langle \sigma \rangle}} \frac1{\ord \sigma}. \label{first-chi-eta-bound}
\end{align}

Fix $g \in G$ with $\supp g < n/2$, and let $U$ be the large eigenspace of $g$. Write $G_U$ for the set-wise stabilizer of $U$ in $G$, and write $p_U$ for the map $p_U:G_U \to \PGL(U)$. Note that if $g \in \langle \sigma \rangle$ then $\sigma \in G_U$ (because $\sigma$ preserves the eigenspaces of $\sigma^i$).
Hence
\begin{align*}
  \frac1{|G|} \sum_{\substack{g \in G \\ \supp g < n/2}} \sum_{\substack{ \sigma \in G \\ g \in \langle \sigma \rangle}} \frac1{\ord \sigma}
  &\leq \sum_{\substack{U \leq \F_q^n \\ \dim U > n/2}} \frac1{|G|} \sum_{\substack{g \in G_U \\ p_U(g)=1}} \sum_{\substack{\sigma \in G_U \\ g \in \langle \sigma \rangle}} \frac1{\ord \sigma} \\
  &= \sum_{\substack{U \leq \F_q^n \\ \dim U > n/2}} \frac1{|G|} \sum_{\sigma \in G_U} \frac{1}{\ord \sigma} \sum_{\substack{g \in \langle \sigma \rangle \\ p_U(g)=1}} 1 \\
  &= \sum_{\substack{U \leq \F_q^n \\ \dim U > n/2}} \frac1{|G|} \sum_{\sigma \in G_U} \frac{1}{\ord p_U(\sigma)} \\
  &= \sum_{\substack{U \leq \F_q^n \\ \dim U > n/2}} \frac{1}{[G:G_U]} \, \harm_{p_U(G_U)}.
\end{align*}
Now note that $G$ acts transitively on the subspaces of each dimension, and
\[
  p_U(G_U) \cong
  \begin{cases}
    \PGL(U) & \text{if}~\dim U<n, \\
    \PSL(U) & \text{if}~\dim U=n.
  \end{cases}
\]
Thus
\[
  \chietaratio \leq q^{-c\sqrt{n}} + \sum_{n/2 < d < n} \eta_{\PGL(d, q)} + \eta_{\PSL(n,q)},
\]
so the theorem follows from Theorem~\ref{inverse-orders-theorem}.
\end{proof}

\section{A choice of sets \texorpdfstring{$\frakC$}{C}}

In this section we define large conjugation-invariant subsets $\frakC_1, \frakC_2 \subset \SL(n,q)$, and prove that if $g_1 \in \frakC_1$ and $g_2 \in \frakC_2$ then $\langle g_1, g_2 \rangle = \SL(n,q)$. Theorem~\ref{main-thm} will then follow from Theorems~\ref{thm:pisigma} and \ref{thm:chietabound}.

\begin{enumerate}
    \item Let $\frakC_1 \subset \SL(n,q)$ be the set of all irreducible $g$ of order $(q^n-1)/(q-1)$.
    \item Let $\frakC_2 \subset \SL(n,q)$ be the set of all $g$ of order $q^{n-1}-1$ splitting $\F_q^n$ as $\ell \oplus W$, where $\dim \ell=1$ and $\dim W = n-1$.
\end{enumerate}

\begin{lemma} \label{size}
For $i\in \{1,2\}$ we have
\[
  \frac{|\frakC_i|}{|\SL(n,q)|} \gg \frac1{n \loglog(q^n)} .
\]
\end{lemma}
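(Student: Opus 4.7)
The plan is to prove both bounds by direct counting, using the conjugacy-class description of irreducible elements of $\GL$ together with the totient lower bound \eqref{totient-bound}.

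First I would handle $\frakC_1$ by parametrizing its elements via their minimal polynomials. Any irreducible $g \in \GL(n,q)$ has centralizer equal to the Singer subgroup $\F_q[g]^\times \cong \F_{q^n}^\times$, so its $\GL(n,q)$-conjugacy class has size $|\GL(n,q)|/(q^n-1)$. For a root $x \in \F_{q^n}^\times$ of the minimal polynomial $f$ we have $\det g = N_{\F_{q^n}/\F_q}(x) = x^{m}$ with $m := (q^n-1)/(q-1)$, so $g \in \SL(n,q)$ iff $\ord x \mid m$, and requiring $\ord g = m$ is equivalent to $\ord x = m$. Since $m > q^{n-1}$, any such $x$ automatically generates $\F_{q^n}$ over $\F_q$ (otherwise $x \in \F_{q^d}$ with $d \mid n$, $d < n$, would give $\ord x \leq q^d - 1 < m$), so $f$ really is irreducible of degree $n$. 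Grouping the $\totient(m)$ admissible roots into Galois orbits of size $n$ yields $\totient(m)/n$ admissible polynomials, whence
\[
  |\frakC_1| = \frac{\totient(m)}{n} \cdot \frac{|\GL(n,q)|}{q^n-1}
  \quad\text{and}\quad
  \frac{|\frakC_1|}{|\SL(n,q)|} = \frac{\totient(m)}{nm} \gg \frac{1}{n \loglog(q^n)}
\]
by \eqref{totient-bound}.

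For $\frakC_2$ I would count triples $(\ell, W, g)$ with $\ell \oplus W = \F_q^n$, $\dim \ell = 1$, and $g \in \SL(n,q)$ acting as a scalar $\alpha$ on $\ell$ and as an irreducible $h$ of order $q^{n-1}-1$ on $W$. The number of complementary pairs $(\ell, W)$ is $(q^n-1) q^{n-1}/(q-1)$, and for each such pair the same Singer-cycle count as above yields $(\totient(q^{n-1}-1)/(n-1)) \cdot |\GL(n-1,q)|/(q^{n-1}-1)$ admissible $h$'s. The scalar $\alpha = (\det h)^{-1}$ is then forced by the determinant condition, and $\ord g = \lcm(\ord \alpha, q^{n-1}-1) = q^{n-1}-1$ holds automatically because $\ord \alpha \mid q-1 \mid q^{n-1}-1$. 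For $n \geq 3$ each $g \in \frakC_2$ determines $(\ell, W)$ uniquely: the eigenvalues of $g$ on $W$ are roots of an irreducible polynomial of degree $n-1 \geq 2$ and so lie in $\F_{q^{n-1}} \setminus \F_q$, while the eigenvalue on $\ell$ lies in $\F_q$, so $\ell$ is recovered as the $\F_q$-rational eigenspace. Using $|\GL(n,q)| = q^{n-1}(q^n-1)|\GL(n-1,q)|$ and simplifying yields
\[
  \frac{|\frakC_2|}{|\SL(n,q)|} = \frac{\totient(q^{n-1}-1)}{(n-1)(q^{n-1}-1)} \gg \frac{1}{n \loglog(q^n)}.
\]

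The proof is essentially bookkeeping; the only nontrivial points are the automatic primitivity of order-$m$ elements in the $\frakC_1$ count (an elementary size inequality) and the uniqueness of the decomposition $(\ell, W)$ for $\frakC_2$ (a comparison of the eigenvalue fields on $\ell$ and on $W$). Small cases such as $n = 2$ can be treated by hand or absorbed into the implied constant.
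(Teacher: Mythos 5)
Your proof is correct and follows essentially the same route as the paper's: both bounds come from parametrizing $\frakC_i$ via Singer-type elements, counting the $\totient(\cdot)$ admissible primitive elements (grouped into Galois orbits), and invoking \eqref{totient-bound}. The only cosmetic difference is that for $\frakC_2$ you count triples $(\ell, W, g)$ directly while the paper phrases the same computation in terms of conjugacy classes and centralizers $\F_{q^{n-1}}^\times \times \F_q^\times$.
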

\begin{proof}
Let $G = \GL(n,q)$. Consider first $\frakC_1$. For every $g \in \frakC_1$ we have $C_G(g) \cong \F_{q^n}^\times$, and the $G$-conjugacy classes in $\frakC_1$ are in bijection with irreducible polynomials of degree $n$ and order $(q^n - 1)/(q-1)$. Thus
\[
  \frac{|\frakC_1|}{|G|} = \frac{\totient((q^n-1)/(q-1))}{n (q^n - 1)}.
\]
The claimed estimate follows from \eqref{totient-bound}.

Now consider $\frakC_2$. In this case, the centralizers are isomorphic to $\F_{q^{n-1}}^\times \times \F_q^\times$, and conjugacy classes are in bijection with irreducible polynomials of degree $n-1$ and order $q^{n-1} - 1$ (unless $(n, q) = (2,3)$). There are $\totient(q^{n-1} - 1) / (n-1)$ such polynomials. The claim follows as before.
\end{proof}

\begin{lemma}
Assume $n \geq 3$ and $(n,q) \neq (3,4)$. Let $g_1 \in \frakC_1$ and $g_2 \in \frakC_2$. Then $\langle g_1, g_2 \rangle = \SL(n,q)$.
\end{lemma}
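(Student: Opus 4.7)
The plan is to pass to the projective action on $\mathbb{P}^{n-1}(\F_q)$, show that the image $\bar H \leq \PGL(n,q)$ of $H = \langle g_1, g_2 \rangle$ is $2$-transitive on $\mathbb{P}^{n-1}(\F_q)$, invoke the Cameron--Kantor classification of $2$-transitive collineation groups to conclude $\bar H \supseteq \PSL(n,q)$, and finally lift to $H = \SL(n,q)$ by a short argument with orders.

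Identifying $V$ with $\F_{q^n}$ via the irreducible action of $g_1$, I take $g_1$ to be multiplication by a generator of the norm-one subgroup $\ker N \leq \F_{q^n}^\times$, where $N \colon \F_{q^n}^\times \to \F_q^\times$ is the field norm. Then $\bar g_1$ lies in the $\PGL$-Singer cycle $\bar S$ (which acts regularly on $\mathbb{P}^{n-1}(\F_q)$), as a subgroup of index $\gcd(n,q-1)$; in particular $\langle \bar g_1\rangle$ has $\gcd(n,q-1)$ equal-sized orbits on $\mathbb{P}^{n-1}(\F_q)$. Meanwhile $\bar g_2$ fixes the projective point $P$ corresponding to $\ell$ and acts transitively on the projective hyperplane $\mathbb{P}(W)$, because $g_2|_W$ is a Singer cycle of $\GL(W)$ of order $q^{n-1}-1$. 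Transitivity of $\bar H$ on $\mathbb{P}^{n-1}(\F_q)$ then follows from a short connectivity argument: since $g_1$ is irreducible, none of the hyperplanes $g_1^k W$ coincide with $W$, and the translates $\bar g_1^k \mathbb{P}(W)$ together meet every $\bar g_1$-orbit, so the $\bar g_2$-orbit $\mathbb{P}(W)$ glues them into a single $\bar H$-orbit.

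To upgrade to $2$-transitivity I examine the stabiliser $\bar H_P$, which contains $\bar g_2$ (transitive on $\mathbb{P}(W)$). For a point $Q \in \mathbb{P}^{n-1}(\F_q) \setminus (\mathbb{P}(W) \cup \{P\})$, I would produce an element of $\bar H_P$ mapping $Q$ into $\mathbb{P}(W)$: use transitivity of $\bar H$ to pick $h \in \bar H$ with $h(Q) \in \mathbb{P}(W)$, and then correct the action on $P$ by composing with a suitable power of $\bar g_1$ and a conjugate of $\bar g_2$. Applying the Cameron--Kantor theorem \cite{cameron-kantor-2018} to the resulting $2$-transitive group forces $\bar H \supseteq \PSL(n,q)$; the hypothesis $n \geq 3$, $(n,q) \neq (3,4)$, combined with the irreducibility of $g_1$ and the block-diagonal structure of $g_2$, is enough to exclude the short list of $2$-transitive exceptions (notably $A_7 \leq \PGL(4,2)$).

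For the final lifting step, $\bar H \supseteq \PSL(n,q)$ gives $H Z = \SL(n,q)$ where $Z = Z(\SL(n,q))$ has order $\gcd(n,q-1)$; using the identity $\gcd((q^n-1)/(q-1), q-1) = \gcd(n,q-1)$ one verifies $Z \subseteq \langle g_1\rangle \subseteq H$, so $H = HZ = \SL(n,q)$. The main obstacle in the plan is the $2$-transitivity step, specifically producing enough elements of $\bar H_P$ to reach the points of $\mathbb{P}^{n-1}(\F_q) \setminus (\mathbb{P}(W) \cup \{P\})$ lying outside the $\bar g_2$-orbit $\mathbb{P}(W)$; smaller exceptional cases appearing in the Cameron--Kantor list may also require separate verification.
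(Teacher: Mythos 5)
Your plan shares the broad shape of the paper's argument (establish a strong transitivity property of $H = \langle g_1, g_2 \rangle$ and then invoke Cameron--Kantor), but two of the key steps have genuine gaps.

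First, the transitivity argument does not close. You note correctly that the $\bar g_1$-orbits on $\mathbb{P}^{n-1}(\F_q)$ are the fibres of the norm map modulo scalars, and that $\bar g_2$ is transitive on $\mathbb{P}(W)$; so to glue the $\bar g_1$-orbits into a single $\bar H$-orbit it suffices to show that $\mathbb{P}(W)$ meets every $\bar g_1$-orbit. But your ``connectivity'' argument via the translates $\bar g_1^k \mathbb{P}(W)$ does not prove this: since each $\bar g_1$-orbit is $\bar g_1$-invariant, the translate $\bar g_1^k \mathbb{P}(W)$ meets exactly the same set of $\bar g_1$-orbits that $\mathbb{P}(W)$ does, so taking the union of translates buys nothing. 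What is actually needed is that the norm $N:\F_{q^n} \to \F_q$ is surjective when restricted to the hyperplane $W$, and this is a real arithmetic fact rather than a formal one --- the paper proves it in the appendix via Gauss sum estimates (Theorem~\ref{sawin-lemma} and Corollary~\ref{cor:sawin-hyperplane}), and one can check that the naive bound only just works for a hyperplane.

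Second, your $2$-transitivity step is, as you concede, unfinished: ``correct the action on $P$ by composing with a suitable power of $\bar g_1$ and a conjugate of $\bar g_2$'' is not a proof, and it is not obvious how to produce elements of the stabiliser $\bar H_P$ moving points of $\mathbb{P}^{n-1}(\F_q) \setminus (\mathbb{P}(W)\cup\{P\})$ into $\mathbb{P}(W)$. The paper sidesteps this entirely by aiming at a different transitivity property: it shows $G$ is transitive on \emph{$2$-flats} (planes through the origin) rather than $2$-transitive on points. Once point-transitivity is known, transitivity on planes reduces to the observation that $G_\ell$ must be transitive on the planes containing $\ell$, which correspond to lines of $W$, and this is immediate from the fact that $g_2|_W$ is a Singer cycle. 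The Cameron--Kantor classification \cite[Proposition~8.4]{cameron-kantor-2018} handles flat-transitive groups just as well as $2$-transitive ones, and singles out only $A_7 \leq \SL(4,2)$ as an exception, which is excluded by an order count. Your final lifting step from $\PSL$ to $\SL$ via $Z\subseteq \langle g_1\rangle$ is fine but becomes unnecessary when one works with $k$-flat transitivity inside $\SL(n,q)$ directly as the paper does.
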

\begin{proof}
Let $G = \langle g_1, g_2 \rangle$.

\begin{claim}
$G$ acts transitively on $\F_q^n \setminus\{0\}$.
\end{claim}

We may assume that $\F_q^n$ is identified with $\F_{q^n}$ in such a way that $g_1$ acts as some $\alpha\in\F_{q^n}^\times$ of order $(q^n-1)/(q-1)$. Then the orbits of $g_1$ are precisely the fibres of the norm map. On the other hand let $\F_q^n = \ell \oplus W$ be the splitting respected by $g_2$. Then $g_2$ acts transitively on the nonzero points of $W$. Thus it suffices to prove that the norm map is surjective on $W$. This is proved in the appendix: see Corollary~\ref{cor:sawin-hyperplane}.

\begin{claim}
$G$ acts transitively on planes in $\F_q^n$.
\end{claim}

Again let $\F_q^n = \ell \oplus W$ be the splitting respected by $g_2$. Since by the previous claim we know that $G$ is transitive on lines through the origin, it suffices to prove that the stabilizer $G_\ell$ is transitive on planes containing $\ell$. But planes containing $\ell$ are in obvious bijection with lines through the origin in $W$, and $g_2$ acts transitively on the nonzero points of $W$, so we're done.

\begin{claim}
$G = \SL(n,q)$ (i.e., the lemma holds).
\end{claim}

Linear groups which are transitive on $k$-flats for some $k$ in the range $2 \leq k \leq n-2$ have been classified by Cameron and Kantor~\cite{cameron-kantor, cameron-kantor-2018}. Applying \cite[Proposition~8.4]{cameron-kantor-2018}, we find that either $G = \SL(n,q)$ or $G \cong A_7$ inside $\SL(4,2)$. The latter group is obviously ruled out (having no element of order $15$ for instance), so $G = \SL(n,q)$.

\end{proof}

We can finally prove Theorem~\ref{main-thm}.

\begin{proof}[Proof of Theorem~\ref{main-thm}]
The case $n=2$ is classical, and follows from Dickson's classification of subgroups of $\SL(2, q)$ (see Suzuki~\cite[Section~3.6]{suzuki} for a modern treatment), so assume $n\geq 3$.

Let $G = \langle \pi, \sigma\rangle$. Combining Theorem~\ref{thm:pisigma}, Theorem~\ref{thm:chietabound}, and Lemma~\ref{size}, for $i \in \{1, 2\}$ we have
\[
  \P(G \cap \frakC_i = \emptyset ) \leq n \loglog (q^n) \left( q^{-c\sqrt{n}} + e^{-(2-o(1)) \sqrt{n \log n \log q}} \right).
\]
As long as either $q$ or $n$ is large, the factor $n \loglog(q^n)$ can be absorbed into the $c$ or the $o(1)$ (and the theorem is vacuous if both $q$ and $n$ are bounded). Thus almost surely $G$ contains a $g_1 \in \frakC_1$ and a $g_2 \in \frakC_2$, so $G = \SL(n,q)$ by the previous lemma.
\end{proof}

\appendix

\section{Surjectivity of the norm on subspaces of \texorpdfstring{$\F_{q^n}$}{F\_q\^{}n}}

\def\norm{N}
\def\C{\mathbf{C}}

The norm of the extension $\F_{q^n} / \F_q$ is the map $\norm: \F_{q^n} \to \F_q$ defined by
\[
  \norm(x) = x x^q \cdots x^{q^{n-1}} = x^{(q^n-1)/(q-1)}.
\]
It is a basic fact of finite fields that $\norm$ is surjective. It turns out that $\norm$ remains surjective on any sufficiently large subspace.

\begin{theorem}\label{sawin-lemma}
Let $W \leq \F_{q^n}$ be an $\F_q$-linear subspace, and suppose either
\begin{enumerate}
    \item $\dim W \geq n/2 + 1$, or
    \item $\dim W = (n+1)/2$ and $\gcd(n, q-1) < q^{1/2} + 1$.
\end{enumerate}
Then $\norm$ is surjective on $W$.
\end{theorem}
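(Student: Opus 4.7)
The plan is to count, for each $\alpha \in \F_q^\times$, the solutions $x \in W$ of $\norm(x) = \alpha$ via orthogonality of multiplicative characters of $\F_q^\times$, and then handle the two dimension regimes of the hypothesis separately. Writing $N_\alpha := |\{x \in W : \norm(x) = \alpha\}|$, orthogonality gives
\[
  N_\alpha = \frac{1}{q-1} \sum_\chi \bar\chi(\alpha) \, S_\chi,
  \qquad
  S_\chi := \sum_{x \in W, \, x \neq 0} \chi(\norm(x)),
\]
where $\chi$ ranges over multiplicative characters of $\F_q^\times$. The trivial character contributes the main term $(|W|-1)/(q-1)$, so it suffices to control $|S_\chi|$ for nontrivial $\chi$.

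For each nontrivial $\chi$, $\psi := \chi \circ \norm$ is a nontrivial multiplicative character of $\F_{q^n}^\times$, which I extend by $\psi(0) = 0$. Expanding $1_W$ in the additive characters of $\F_{q^n}$ dual to the $\F_q$-bilinear pairing $(x,y) \mapsto \op{Tr}_{\F_{q^n}/\F_q}(xy)$---whose annihilator $W^\perp$ is an $\F_q$-subspace of dimension $n - \dim W$---a short computation gives
\[
  S_\chi = \frac{G(\psi)}{q^{n-\dim W}} \sum_{y \in W^\perp, \, y \neq 0} \bar\psi(y),
\]
where $G(\psi)$ is the Gauss sum of $\psi$. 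Combining $|G(\psi)| = q^{n/2}$ with the trivial estimate on the inner sum yields $|S_\chi| < q^{n/2}$.

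In case (1), where $\dim W \geq n/2 + 1$, the combined error $(q-2) q^{n/2}/(q-1)$ is strictly smaller than the main term, which is at least $(q^{n/2+1}-1)/(q-1)$, so every fibre meets $W$.

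Case (2) is the main obstacle: when $\dim W = (n+1)/2$ the main term $\sim q^{(n-1)/2}$ no longer beats the naive error $\sim q^{n/2}$. The key observation I would exploit is that $\norm(cv) = c^n \norm(v)$ for $c \in \F_q$ and $v \in W$, so $\norm(W) \setminus \{0\}$ is a union of cosets of the subgroup of $n$-th powers $(\F_q^\times)^n \leq \F_q^\times$, whose index is $g := \gcd(n, q-1)$. It therefore suffices to verify that each of the $g$ cosets is hit by $\norm|_{W \setminus \{0\}}$. Running the orthogonality argument with $\chi$ restricted to the $g$ characters of $\F_q^\times$ satisfying $\chi^n = 1$, the main term becomes $(q^{(n+1)/2}-1)/g$ and the total error is at most $(g-1)q^{n/2}/g$; the required strict inequality $q^{1/2} - (g-1) > q^{-n/2}$ is precisely what the hypothesis $g < q^{1/2} + 1$ delivers.
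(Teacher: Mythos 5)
Your proof is correct and follows essentially the same approach as the paper: detect $\norm^{-1}(\alpha)\cap W$ by orthogonality of multiplicative characters of $\F_q^\times$, Fourier-expand $1_W$ in additive characters of $\F_{q^n}$, and bound each resulting Gauss sum by $q^{n/2}$. The only stylistic difference is that the paper observes once and for all that $\sum_{x\in W}\chi(\norm(x))$ vanishes unless $\chi^n=1$ (so at most $\gcd(n,q-1)$ characters contribute) and handles both hypotheses uniformly via $q^{\dim W - n/2} > \gcd(n,q-1)-1$, whereas you use the cruder all-characters error bound in case (1) and reinvent the $n$-torsion restriction only for case (2).
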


We are grateful to Sawin~\cite{sawin-MO} for the following proof. The wording has been modified slightly, only to spell out the details a little more.

\begin{proof}
The number of elements of $W$ with norm $a$ is
\[
  \frac{1}{q-1} \sum_{\chi: \F_q^\times \to \C^\times} \sum_{x \in W} \chi(\norm(x)) \overline{\chi(a)}.
\]
The summand corresponding to $\chi$ vanishes unless $\chi$ has order dividing $n$ (as
\[
  \chi(\norm(\lambda x)) = \chi(\lambda)^n \chi(\norm(x))
\]
for $\lambda \in \F_q$), so there are at most ${\gcd(n,q-1)}$ nonzero terms. The $\chi=1$ term has size $q^{\dim W}$, so it is sufficient to prove that each of the other terms has size less than
\[
  q^{\dim W} / (\gcd (n,q-1)-1).
\]

Note that
\begin{equation}\label{sawin-term}
  \sum_{x \in W} \chi(\norm(x)) = \sum_{x \in \F_{q^n}} \frac1{|W^\perp|} \sum_{\theta \in W^\perp} \theta(x) \chi(\norm(x)),
\end{equation}
where $W^\perp$ is the set of additive characters $\theta:\F_{q^n} \to \C^\times$ which vanish on $W$. The sum
\[
  \sum_{x \in \F_{q^n}} \theta(x) \chi(\norm(x))
\]
is a Gauss sum, and thus has size $q^{n/2}$ for $\chi \neq 1$. We deduce that \eqref{sawin-term} also has size at most $q^{n/2}$. Thus we are done provided that
\[
  q^{\dim W - n/2} > \gcd(n, q-1) - 1,
\]
and this is easily checked under either hypothesis.
\end{proof}

\begin{remark}
If $n$ is even there is a subspace $W = \F_{q^{n/2}}$ of dimension $n/2$ on which $\norm(x)$ is always a square, and in particular not surjective. We do not know any counterexamples with $\dim W = (n+1)/2$.
\end{remark}

We actually only need the $\dim W = n-1$ case, which we single out now.

\begin{corollary}\label{cor:sawin-hyperplane}
Let $n\geq 3$, and let $W \leq \F_{q^n}$ be an $\F_q$-linear hyperplane. Then $\norm$ is surjective on $W$.
\end{corollary}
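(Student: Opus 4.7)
The plan is to deduce this immediately from Theorem~\ref{sawin-lemma} applied with $\dim W = n - 1$, splitting by the size of $n$.

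For $n \geq 4$ the inequality $n - 1 \geq n/2 + 1$ holds, so hypothesis (1) of Theorem~\ref{sawin-lemma} is met and we are done. For $n = 3$ a hyperplane has $\dim W = 2 = (n+1)/2$, so we must instead check hypothesis (2): $\gcd(3, q - 1) < q^{1/2} + 1$. If $q \not\equiv 1 \pmod 3$ the left-hand side equals $1$ and the inequality is trivial; if $q \equiv 1 \pmod 3$ then it equals $3$ and we need $q > 4$. The only prime power this excludes is $q = 4$.

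The principal obstacle, minor as it is, is thus the boundary case $(n, q) = (3, 4)$, in which both the dimension and gcd hypotheses of Theorem~\ref{sawin-lemma} hold only with equality. I would handle it directly: in $\F_{64}$ the norm is $N(x) = x^{21}$, its kernel has index $3$ in $\F_{64}^\times$, and since $N(\lambda x) = \lambda^3 N(x) = N(x)$ for $\lambda \in \F_4^\times$, each of the five $\F_4^\times$-lines inside a $2$-dimensional $\F_4$-subspace $W$ lies in a single coset of $\ker N$. Surjectivity fails only if these five lines are confined to at most two of the three cosets of $\ker N$; a short pigeonhole argument on the multiplicative structure of $\F_{64}^\times$, or a direct enumeration of the $21$ such subspaces (with $\GL_6(\F_2)$-symmetry reducing the check further), rules this out and finishes the proof.
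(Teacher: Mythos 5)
Your reduction to Theorem~\ref{sawin-lemma} (hypothesis~(1) for $n\geq4$, hypothesis~(2) for $n=3$ with $q\neq4$, leaving only $(n,q)=(3,4)$ to be checked directly) is exactly the paper's proof, which likewise just asserts that the $(3,4)$ case ``can be checked directly.'' One small correction to your parenthetical about that check: the relevant symmetry is multiplication by $\F_{64}^\times$, which acts transitively on the $21$ two-dimensional $\F_4$-subspaces (the stabilizer of any such $W$ is $\F_4^\times$, since $\{a:aW\subseteq W\}\cup\{0\}$ is a subfield containing $\F_4$ but not all of $\F_{64}$) and maps surjective instances of $\norm|_W$ to surjective instances, whereas $\GL_6(\F_2)$ preserves neither the $\F_4$-structure nor the norm.
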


\begin{proof}
This is immediate from the theorem, with the exception of $(n, q) = (3,4)$, and the case $(n, q) = (3, 4)$ can be checked directly.
\end{proof}

\bibliographystyle{alpha}
\bibliography{refs}

\end{document}